\newtheorem{assumption}{Assumption}
\newtheorem{lemma}{Lemma}
\newtheorem{proposition}{Proposition}
\newtheorem{theorem}{Theorem}
\newtheorem{definition}{Definition}
\newcommand{\N}{{\mathbb N}}
\newcommand{\Z}{{\mathbb Z}}
\newcommand{\R}{{\mathbb R}}
\newcommand{\C}{{\mathbb C}}
\newcommand{\LL}{{\mathbb L}}
\newcommand{\M}{{\mathbb M}}
\newcommand{\D}{{\mathbb D}}
\newcommand{\Dbar}{\overline{\mathbb D}}
\newcommand{\U}{{\mathcal U}}
\newcommand{\Ubar}{\overline{\mathcal U}}
\newcommand{\cercle}{{\mathbb S}^1}
\newcommand{\dps}{\displaystyle}
\newcommand{\Ng}{| \! | \! |}
\newcommand{\Nd}{| \! | \! |}
\begin{document}

\title{The Leray-G{\aa}rding method \\
for finite difference schemes. II. \\
Smooth crossing modes}

\author{Jean-Fran\c{c}ois {\sc Coulombel}\thanks{Institut de Math\'ematiques de Toulouse - UMR 5219, Universit\'e de Toulouse ; 
CNRS, Universit\'e Paul Sabatier, 118 route de Narbonne, 31062 Toulouse Cedex 9 , France. Research of J.-F. C. was supported 
by ANR project Nabuco, ANR-17-CE40-0025. Email: {\tt jean-francois.coulombel@math.univ-toulouse.fr}}}
\date{\today}
\maketitle

\begin{abstract}
In \cite{jfcX} a multiplier technique, going back to {\sc Leray} and {\sc G{\aa}rding} for scalar hyperbolic partial differential equations, 
has been extended to the context of finite difference schemes for evolutionary problems. The key point of the analysis in \cite{jfcX} 
was to obtain a discrete energy-dissipation balance law when the initial difference operator is multiplied by a suitable quantity. The 
construction of the energy and dissipation functionals was achieved in \cite{jfcX} under the assumption that all modes were separated. 
We relax this assumption here and construct, for the same multiplier as in \cite{jfcX}, the energy and dissipation functionals when some 
modes cross. Semigroup estimates for fully discrete hyperbolic initial boundary value problems are deduced in this broader context by 
following the arguments of \cite{jfcX}.
\end{abstract}
\bigskip

\noindent {\small {\bf AMS classification:} 65M06, 65M12, 35L03, 35L04.}

\noindent {\small {\bf Keywords:} hyperbolic equations, difference approximations, stability, boundary conditions, semigroup estimates.}
\bigskip
\bigskip


Throughout this article, we keep the same notation as in \cite{jfcX}. We introduce the subsets of the complex plane:
\begin{align*}
&\U \, := \, \{\zeta \in \C,|\zeta|>1 \}\, ,\quad \Ubar \, := \, \{\zeta \in \C,|\zeta| \ge 1 \}\, ,\\
&\D \, := \, \{\zeta \in \C,|\zeta|<1 \}\, ,\quad \cercle \, := \, \{\zeta \in \C,|\zeta|=1 \} \, ,\quad \Dbar \, := \, \D \cup \cercle \, .
\end{align*}
We let ${\mathcal M}_n ({\mathbb K})$ denote the set of $n \times n$ matrices with entries in ${\mathbb K} = \R \text{ or } \C$. If 
$M \in {\mathcal M}_n (\C)$, $M^*$ denotes the conjugate transpose of $M$. We let $I$ denote the identity matrix or the identity 
operator when it acts on an infinite dimensional space. We use the same notation $x^* \, y$ for the Hermitian product of two vectors 
$x,y \in \C^n$ and for the Euclidean product of two vectors $x,y \in \R^n$. The norm of a vector $x \in \C^n$ is $|x| := (x^* \, x)^{1/2}$. 
The induced matrix norm on ${\mathcal M}_n (\C)$ is denoted $\| \cdot \|$.

The letter $C$ denotes a constant that may vary from line to line or within the same line. The dependence of the constants on the various 
parameters is made precise throughout the text.

In what follows, we let $d \ge 1$ denote a fixed integer, which will stand for the dimension of the space domain we are considering. We shall 
use the space $\ell^2$ of square integrable sequences. Sequences may be valued in $\C^k$ for some integer $k$. Some sequences will be 
indexed by $\Z^{d-1}$ while some will be indexed by $\Z^d$ or a subset of $\Z^d$. We thus introduce some specific notation for the norms. 
Let $\Delta x_k>0$ for $k=1,\dots,d$ be $d$ space steps as considered herafter. We shall make use of the $\ell^2(\Z^{d-1})$-norm that we 
define as follows: for all $v \in \ell^2 (\Z^{d-1})$,
\begin{equation*}
\| v \|_{\ell^2(\Z^{d-1})}^2 \, := \, \left( \, \prod_{k=2}^d \Delta x_k \, \right) \, \sum_{\nu=2}^d \, \sum_{j_\nu \in \Z} \, |v_{j_2,\dots,j_d}|^2 \, .
\end{equation*}
The corresponding scalar product is denoted $\langle \cdot,\cdot \rangle_{\ell^2(\Z^{d-1})}$. Then for all integers $m_1 \le m_2$ in $\Z$, we set
\begin{equation*}
\Ng u \Nd_{m_1,m_2}^2 \, := \, \Delta x_1 \, \sum_{j_1 = m_1}^{m_2} \, \|u_{j_1,\cdot} \|_{\ell^2(\Z^{d-1})}^2 \, ,
\end{equation*}
to denote the $\ell^2$-norm on the set $[m_1,m_2] \times \Z^{d-1}$ ($m_1$ may equal $-\infty$ and $m_2$ may equal $+\infty$). The corresponding 
scalar product is denoted $\langle \cdot,\cdot \rangle_{m_1,m_2}$. Other notation throughout the text is meant to be self-explanatory.

\section{Introduction}
\label{intro}

This article is a sequel of our previous work \cite{jfcX} where we have developed a multiplier technique for finite difference schemes. The theory 
in \cite{jfcX} encompasses the well-known example of the leap-frog scheme for the transport equation. Our main motivation was to derive stability 
estimates for finite difference schemes with a method that bypasses as much as possible Fourier analysis. This was a first step towards later 
considering multistep time integration techniques with finite volume space discretizations on unstructured meshes. We extend the results of \cite{jfcX} 
by dropping a \emph{simplicity} assumption that was made in this work, which now allows us to consider crossing eigenmodes. Namely, the situation 
we consider here is the one where the latter crossing occurs in a smooth way. We also deal completely with the case of multistep schemes with two 
time levels for which the eigenmode crossing need not be smooth. In order to avoid repeating many arguments from \cite{jfcX}, we shall refer to this 
work whenever possible. We warn the reader that the introduction below is mostly the same as in \cite{jfcX} since the considered problem is the same 
and we have found it easier for the reader to recall all the assumptions needed in the proof of our main result (which is Theorem \ref{mainthm} below). 
The main difference lies in the statement of Assumption \ref{assumption1} below.

We now set some more notation. With $d \in \N^*$ being the considered space dimension, we let $\Delta x_1,\dots,\Delta x_d,\Delta t>0$ denote the space 
and time steps where the ratios, also known as the so-called {\sc Courant-Friedrichs-Lewy} parameters, $\lambda_k :=\Delta t/\Delta x_k$, $k=1,\dots,d$, 
are \emph{fixed} positive constants. We keep $\Delta t \in (0,1]$ as the only free small parameter and let the space steps $\Delta x_1,\dots,\Delta x_d$ vary 
accordingly. The $\ell^2$-norms with respect to the space variables have been previously defined and thus depend on $\Delta t$ and the CFL parameters 
through the cell volume (either $\Delta x_2 \cdots \Delta x_d$ on $\Z^{d-1}$ or $\Delta x_1 \cdots \Delta x_d$ on $\Z^d$). We always identify a sequence 
$w$ indexed by either $\N$ (for time), $\Z^{d-1}$ or $\Z^d$ (for space), with the corresponding step function. In particular, we shall feel free to take Fourier 
or Laplace transforms of such sequences.

For all $j\in \Z^d$, we set $j=(j_1,j')$ with $j':=(j_2,\dots,j_d) \in \Z^{d-1}$. We let $p,q,r \in \N^d$ denote some fixed multi-integers, and define $p_1,q_1,r_1$, 
$p',q',r'$ according to the above notation. We also let $s \in \N$ denote some fixed integer. This article is devoted to recurrence relations of the form:
\begin{equation}
\label{numibvp}
\begin{cases}
{\dps \sum_{\sigma=0}^{s+1}} Q_\sigma \, u_j^{n+\sigma} \, = \, \Delta t \, F_j^{n+s+1} \, ,& j' \in \Z^{d-1} \, ,\quad j_1 \ge 1\, ,\quad n\ge 0 \, ,\\
u_j^{n+s+1} +{\dps \sum_{\sigma=0}^{s+1}} B_{j_1,\sigma} \, u_{1,j'}^{n+\sigma} \, = \, g_j^{n+s+1} \, ,& j' \in \Z^{d-1} \, ,\quad j_1=1-r_1,\dots,0\, ,\quad n\ge 0 \, ,\\
u_j^n \, = \, f_j^n \, ,& j' \in \Z^{d-1} \, ,\quad j_1\ge 1-r_1\, ,\quad n=0,\dots,s \, ,
\end{cases}
\end{equation}
where the operators $Q_\sigma$ and $B_{j_1,\sigma}$ are given by:
\begin{equation}
\label{defop}
Q_\sigma \, := \, \sum_{\ell_1=-r_1}^{p_1} \, \sum_{\ell'=-r'}^{p'} \, a_{\ell,\sigma} \, {\bf S}^\ell \, ,\quad 
B_{j_1,\sigma} \, := \, \sum_{\ell_1=0}^{q_1} \, \sum_{\ell'=-q'}^{q'} \, b_{\ell,j_1,\sigma} \, {\bf S}^\ell \, .
\end{equation}
In \eqref{defop}, the $a_{\ell,\sigma},b_{\ell,j_1,\sigma}$ are \emph{real numbers} and are independent of the small parameter $\Delta t$ (they may depend on the 
CFL parameters though), while ${\bf S}$ denotes the shift operator on the space grid: $({\bf S}^\ell v)_j :=v_{j+\ell}$ for $j,\ell \in \Z^d$. We have also used the 
short notation
\begin{equation*}
\sum_{\ell'=-r'}^{p'} \, := \, \sum_{\nu=2}^d \, \sum_{\ell_\nu=-r_\nu}^{p_\nu} \, ,\quad 
\sum_{\ell'=-q'}^{q'} \, := \, \sum_{\nu=2}^d \, \sum_{\ell_\nu=-q_\nu}^{q_\nu} \, .
\end{equation*}
Namely, the operators $Q_\sigma$ and $B_{j_1,\sigma}$ only act on the spatial variable $j \in \Z^d$, and the index $\sigma$ in \eqref{numibvp} keeps track of the 
dependence of \eqref{numibvp} on the $s+2$ time levels involved at each time iteration.

The numerical scheme \eqref{numibvp} is understood as follows: one starts with $\ell^2$ initial data $(f_j^0)$, ..., $(f_j^s)$ defined on $[1-r_1,+\infty) \times \Z^{d-1}$. 
The source terms $(F_j^n)$ and $(g_j^n)$ in \eqref{numibvp} are given. Assuming that the solution $u$ has been defined up to some time index $n+s$, $n \ge 0$, 
then the first and second equations in \eqref{numibvp} should uniquely determine $u_j^{n+s+1}$ for $j_1 \ge 1-r_1$, $j' \in \Z^{d-1}$. The mesh cells associated with 
$j_1 \ge 1$ correspond to the \emph{interior domain} while those associated with $j_1 = 1-r_1,\dots,0$ represent the \emph{discrete boundary}. Recurrence relations 
of the form \eqref{numibvp} arise when considering finite difference approximations of hyperbolic initial boundary value problems, see \cite{gko}, which is our main 
motivation (the Dirichlet and extrapolation boundary conditions considered in \cite{CLneumann} are typical examples).  We wish to deal here simultaneously with 
explicit and implicit schemes and therefore make the following solvability assumption.

\begin{assumption}[Solvability of \eqref{numibvp}]
\label{assumption0}
The operator $Q_{s+1}$ is an isomorphism on $\ell^2 (\Z^d)$. Moreover, for all $F \in \ell^2 (\N^* \times \Z^{d-1})$ and for all $g \in \ell^2 ([1-r_1,0] \times \Z^{d-1})$, 
there exists a unique solution $u \in \ell^2([1-r_1,+\infty) \times \Z^{d-1})$ to the system
\begin{equation*}
\begin{cases}
Q_{s+1} \, u_j \, = \, F_j \, ,& j' \in \Z^{d-1} \, ,\quad j_1 \ge 1\, ,\\
u_j +B_{j_1,s+1} \, u_{1,j'} \, = \, g_j \, ,& j' \in \Z^{d-1} \, ,\quad j_1=1-r_1,\dots,0\, .
\end{cases}
\end{equation*}
\end{assumption}

\noindent The first and second equations in \eqref{numibvp} therefore uniquely determine $u_j^{n+s+1}$ for $j_1 \ge 1-r_1$ and $j' \in \Z^{d-1}$; one then proceeds 
to the following time index $n+s+2$. Existence and uniqueness of a solution $(u_j^n)$ in $\ell^2([1-r_1,+\infty) \times \Z^{d-1})^\N$ to \eqref{numibvp} follows from 
Assumption \ref{assumption0} as long as the source terms lie in the appropriate functional spaces, so the last requirement for well-posedness is continuous dependence 
of the solution on the three possible source terms $(F_j^n)$, $(g_j^n)$, $(f_j^n)$. This is a \emph{stability} problem for which several definitions can be chosen according 
to the functional framework. The following one dates back to \cite{gks} in one space dimension and to \cite{michelson} in several space dimensions.

\begin{definition}[Strong stability]
\label{defstab1}
The finite difference approximation \eqref{numibvp} is said to be "strongly stable" if there exists a constant $C$ such that for all $\gamma>0$ and all $\Delta t \in \, (0,1]$, 
the solution $(u_j^n)$ to \eqref{numibvp} with zero initial data (that is, $(f_j^0) =\dots =(f_j^s) =0$ in \eqref{numibvp}) satisfies the estimate:
\begin{multline}
\label{stabilitenumibvp}
\dfrac{\gamma}{\gamma \, \Delta t+1} \, \sum_{n\ge s+1} \, \Delta t \, {\rm e}^{-2\, \gamma \, n\, \Delta t} \, \Ng u^n \Nd_{1-r_1,+\infty}^2 
+\sum_{n\ge s+1} \, \Delta t \, {\rm e}^{-2\, \gamma \, n\, \Delta t} \, \sum_{j=1-r_1}^{p_1} \| u_{j_1,\cdot}^n \|_{\ell^2 (\Z^{d-1})}^2 \\
\le \, C \, \left\{ \dfrac{\gamma \, \Delta t+1}{\gamma} \, \sum_{n\ge s+1} \, \Delta t \, {\rm e}^{-2\, \gamma \, n\, \Delta t} \, \Ng F^n \Nd_{1,+\infty}^2 
+\sum_{n\ge s+1} \, \Delta t \, {\rm e}^{-2\, \gamma \, n\, \Delta t} \, \sum_{j_1=1-r_1}^0 \| g_{j_1,\cdot}^n \|_{\ell^2 (\Z^{d-1})}^2 \right\} \, .
\end{multline}
\end{definition}

The main contributions in \cite{gks,michelson} are to show that strong stability can be characterized by an \emph{algebraic condition} which is usually referred to as the 
Uniform {\sc Kreiss-Lopatinskii} Condition. We shall assume here from the start that \eqref{numibvp} is strongly stable. We can thus control, for zero initial data, $\ell^2$ 
type norms of the solution to \eqref{numibvp}. Our goal, as in \cite{jfcX}, is to understand which kind of stability estimate holds for the solution to \eqref{numibvp} when 
one considers \emph{nonzero} initial data $(f_j^0),\dots,(f_j^s)$ in $\ell^2$. We are specifically interested in showing \emph{semigroup} estimates for \eqref{numibvp}, 
that is in controlling the $\ell^\infty_n (\ell^2_j)$ norm of the solution to \eqref{numibvp} (which is stronger than the $\ell^2_n (\ell^2_j)$ control encoded in 
\eqref{stabilitenumibvp}). Our main assumption is the following. It is a relaxed version of the corresponding assumption in \cite{jfcX} where the roots of the dispersion 
relation \eqref{dispersion} below were assumed to be always simple.

\begin{assumption}[Stability for the discrete Cauchy problem]
\label{assumption1}
For $\kappa \in (\C \setminus \{ 0 \})^d$, let us set :
$$
\widehat{Q_\sigma}(\kappa) \, := \, \sum_{\ell=-r}^p \kappa^\ell \, a_{\ell,\sigma} \, ,
$$
where the coefficients $a_{\ell,\sigma}$ are the same as in \eqref{defop} and we use the classical notation $\kappa^\ell := \kappa_1^{\ell_1} \cdots \kappa_d^{\ell_d}$ for 
$\kappa \in (\C \setminus \{ 0 \})^d$ and $\ell \in \Z^d$. Then there exists a finite number of points $\underline{\kappa}^{(1)}, \dots, \underline{\kappa}^{(K)}$ in 
$(\cercle)^d$ such that the following properties hold:
\begin{itemize}
 \item if $\kappa \in (\cercle)^d \setminus \{ \underline{\kappa}^{(1)}, \dots, \underline{\kappa}^{(K)} \}$, the roots to the dispersion relation\footnote{From Assumption 
          \ref{assumption0}, we know that $Q_{s+1}$ is an isomorphism on $\ell^2(\Z^d)$, which implies by Fourier analysis that $\widehat{Q_{s+1}} (\kappa)$ does not 
          vanish for $\kappa \in (\cercle)^d$. In particular, the dispersion relation \eqref{dispersion} is a polynomial equation of degree $s+1$ in $z$ for any $\kappa \in 
          (\cercle)^d$.}:
\begin{equation}
\label{dispersion}
\sum_{\sigma=0}^{s+1} \, \widehat{Q_\sigma} (\kappa) \, z^\sigma \, = \, 0 \, ,
\end{equation}
          are simple and located in $\Dbar$.
 \item if $\kappa$ equals one of the $\underline{\kappa}^{(k)}$'s, the dispersion relation \eqref{dispersion} has one multiple root $\underline{z}^{(k)} \in \D$ (its multiplicity 
          is denoted $m_k$) and all other roots are simple.
 \item for all $k=1,\dots,K$, there exists a neighborhood $\mathcal{V}_k$ of $\underline{\kappa}^{(k)}$ in $\C^d$ and there exist holomorphic functions $z_1, \dots, 
          z_{m_k}$ on $\mathcal{V}_k$ such that
$$
z_1 (\underline{\kappa}^{(k)}) \, = \, \cdots \, = \, z_{m_k} (\underline{\kappa}^{(k)}) \, = \, \underline{z}^{(k)} \, ,
$$
          and for all $\kappa \in \mathcal{V}_k$, $z_1(\kappa),\dots,z_{m_k}(\kappa)$ are the $m_k$ roots to \eqref{dispersion} that are close to $\underline{z}^{(k)}$.
\end{itemize}
\end{assumption}

\noindent  Assumption \ref{assumption1} means that the dispersion relation \eqref{dispersion} can have multiple roots (for stability reasons, multiple roots may only belong 
to $\D$ and not to $\cercle$). When multiple roots occur, we only ask that the splitting of the multiple eigenvalue around each such point be smooth (analytic). The fact that 
we only consider one multiple root at a time is only a matter of clarity and notation. There is no doubt that more elaborate crossings (e.g., with one root remaining double 
along a submanifold of $(\cercle)^d$) could be considered by further refining the techniques developed below. Eventually, we observe that multiple roots of the dispersion 
relation \eqref{dispersion} occur for instance when one uses the Adams-Bashforth or Adams-Moulton time integration technique of order $3$ or higher, see \cite[Chapter III]{hnw} 
(which is the reason why extending the result of \cite{jfcX} was necessary). We now make the following assumption, which already appeared in \cite{gks,michelson} and 
several other works on the same topic.

\begin{assumption}[Noncharacteristic discrete boundary]
\label{assumption2}
For $\ell_1=-r_1,\dots,p_1$, $z \in \C$ and $\eta \in \R^{d-1}$, let us define
\begin{equation}
\label{defA-d}
a_{\ell_1}(z,\eta) \, := \, \sum_{\sigma=0}^{s+1} \, z^\sigma \, \sum_{\ell'=-r'}^{p'} \, a_{(\ell_1,\ell'),\sigma} \, {\rm e}^{i \, \ell' \cdot \eta} \, .
\end{equation}
Then $a_{-r_1}$ and $a_{p_1}$ do not vanish on $\Ubar \times \R^{d-1}$, and they have nonzero degree with respect to $z$ for all $\eta \in \R^{d-1}$.
\end{assumption}

\noindent Our main result is comparable with \cite[Theorem 3.3]{wu}, \cite[Theorems 2.4 and 3.5]{jfcag} and \cite{jfcX}. It shows that strong stability (or "GKS stability") in 
the sense of Definition \ref{defstab1} is a \emph{sufficient} condition for incorporating $\ell^2$ initial conditions in \eqref{numibvp} and proving \emph{optimal} semigroup 
estimates. Our result reads just as in \cite{jfcX} but it now holds in the broader context of Assumption \ref{assumption1}.

\begin{theorem}
\label{mainthm}
Let Assumptions \ref{assumption0}, \ref{assumption1} and \ref{assumption2} be satisfied, and assume that the scheme \eqref{numibvp} is strongly stable in the sense of 
Definition \ref{defstab1}. Then there exists a constant $C$ such that for all $\gamma>0$ and all $\Delta t \in \, (0,1]$, the solution to \eqref{numibvp} satisfies the estimate:
\begin{multline}
\label{estim1d}
\sup_{n \ge 0} \, {\rm e}^{-2\, \gamma \, n\, \Delta t} \, \Ng u^n \Nd_{1-r_1,+\infty}^2 
+\dfrac{\gamma}{\gamma \, \Delta t+1} \, \sum_{n\ge 0} \, \Delta t \, {\rm e}^{-2\, \gamma \, n\, \Delta t} \, \Ng u^n \Nd_{1-r_1,+\infty}^2 \\
+\sum_{n\ge 0} \, \Delta t \, {\rm e}^{-2\, \gamma \, n\, \Delta t} \, \sum_{j_1=1-r_1}^{p_1} \, \| u_{j_1,\cdot}^n \|_{\ell^2(\Z^{d-1})}^2 
\le C \, \left\{ \sum_{\sigma=0}^s \, \Ng f^\sigma \Nd_{1-r_1,+\infty}^2 
+\dfrac{\gamma \, \Delta t+1}{\gamma} \, \sum_{n\ge s+1} \, \Delta t \, {\rm e}^{-2\, \gamma \, n\, \Delta t} \, \Ng F^n \Nd_{1,+\infty}^2 \right. \\
\left. +\sum_{n\ge s+1} \, \Delta t \, {\rm e}^{-2\, \gamma \, n\, \Delta t} \, \sum_{j_1=1-r_1}^0 \, \| g_{j_1,\cdot}^n \|_{\ell^2(\Z^{d-1})}^2 \right\} \, .
\end{multline}
In particular, the scheme \eqref{numibvp} is "semigroup stable" in the sense that there exists a constant 
$C$ such that for all $\Delta t \in \, (0,1]$, the solution $(u_j^n)$ to \eqref{numibvp} with $(F_j^n) =(g_j^n) 
=0$ satisfies the estimate
\begin{equation}
\label{estimsemigroup}
\sup_{n\ge 0} \, \Ng u^n \Nd_{1-r_1,+\infty}^2 \, \le \, C \, \sum_{\sigma=0}^s \, \Ng f^\sigma \Nd_{1-r_1,+\infty}^2 \, .
\end{equation}
The scheme \eqref{numibvp} is also $\ell^2$-stable with respect to boundary data, see \cite[Definition 4.5]{trefethen3}, in the sense that there exists a constant $C$ such 
that for all $\Delta t \in \, (0,1]$, the solution $(u_j^n)$ to \eqref{numibvp} with $(F_j^n)=(f_j^n)=0$ satisfies the estimate
\begin{equation*}
\sup_{n\ge 0} \, \Ng u^n \Nd_{1-r_1,+\infty}^2 \, \le \, C \, \sum_{n\ge s+1} \Delta t \, \sum_{j_1=1-r_1}^0 \, \| g_{j_1,\cdot}^n \|_{\ell^2(\Z^{d-1})}^2 \, .
\end{equation*}
\end{theorem}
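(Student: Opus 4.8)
The plan is to follow the architecture of \cite{jfcX} and to split the estimate \eqref{estim1d} into two independent ingredients: a semigroup (and $\ell^2$) estimate for the pure discrete Cauchy problem on the whole space $\Z^d$, which absorbs the initial data $f^0,\dots,f^s$, and the \emph{already assumed} strong stability \eqref{stabilitenumibvp} for the problem with zero initial data, which absorbs the interior source $F$ and the boundary source $g$. Concretely, one extends the initial data to $j_1\in\Z$ and lets $v$ be the solution of the whole-space Cauchy problem with these extended data and the (extended) source $F$; then $w:=u-v$ solves \eqref{numibvp} with zero initial data, zero interior source on $j_1\ge 1$, and a boundary source equal to $g$ corrected by the boundary traces of $v$. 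Strong stability controls $w$ in terms of that corrected boundary source, while the traces of $v$ entering it are controlled by the Cauchy estimate; superposing the two bounds yields \eqref{estim1d}, and the special cases \eqref{estimsemigroup} and the boundary-data estimate follow by switching off the appropriate sources. All constants must of course be taken uniform in $\gamma>0$ and $\Delta t\in(0,1]$.

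The whole-space estimate is where the Leray--G{\aa}rding multiplier enters, and it is the only place affected by the relaxation of the simplicity assumption. After a Fourier transform in $j\in\Z^d$, the interior recurrence $\sum_{\sigma=0}^{s+1}Q_\sigma\,u^{n+\sigma}=\Delta t\,F^{n+s+1}$ becomes, at each frequency $\kappa\in(\cercle)^d$, a scalar linear recurrence of order $s+1$ whose characteristic polynomial is the dispersion relation \eqref{dispersion}. Following \cite{jfcX}, one multiplies this recurrence by the \emph{same} frequency-dependent multiplier and takes real parts, producing a discrete balance law $\mathcal E^{\,n+1}(\kappa)-\mathcal E^{\,n}(\kappa)+\mathcal D^{\,n}(\kappa)=\text{(source contribution)}$, where $\mathcal E^{\,n}(\kappa)$ is a Hermitian quadratic form in $(\widehat u^{\,n}(\kappa),\dots,\widehat u^{\,n+s}(\kappa))$ and $\mathcal D^{\,n}(\kappa)\ge 0$. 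The point is to choose the underlying Hermitian form so that it is coercive, $\mathcal E^{\,n}(\kappa)\ge c\,(|\widehat u^{\,n}|^2+\dots+|\widehat u^{\,n+s}|^2)$, with $\mathcal D^{\,n}\ge 0$, and with constants uniform in $\kappa$; Parseval then turns the balance into a physical-space energy identity, which summed over $n$ gives simultaneously the $\sup_n$ control and the $\ell^2_n$ dissipation control by the initial data.

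The main obstacle, and the heart of the present paper, is the uniform construction of this coercive form at the crossing frequencies $\underline{\kappa}^{(1)},\dots,\underline{\kappa}^{(K)}$. In the simple-root regime of \cite{jfcX} the form is built from the spectral decomposition $\widehat u^{\,n}=\sum_j c_j\,z_j(\kappa)^n$, but this representation degenerates as roots collide, since the associated Vandermonde and partial-fraction data blow up. The remedy is Assumption \ref{assumption1}: the branches $z_1(\kappa),\dots,z_{m_k}(\kappa)$ are holomorphic on a neighborhood $\mathcal V_k$ of $\underline{\kappa}^{(k)}$, so the relevant spectral objects (the Riesz projector onto the cluster of roots near $\underline{z}^{(k)}$ and the energy attached to it) can be written as contour integrals depending holomorphically on $\kappa$, and therefore extend continuously \emph{through} the crossing. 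Crucially, the multiple root $\underline{z}^{(k)}$ lies in $\D$, so after shrinking $\mathcal V_k$ the whole cluster stays at a positive distance from $\cercle$; on that cluster a suitable quadratic Lyapunov form is strictly contractive (that is, $\mathcal D^{\,n}>0$) and may be chosen smoothly in $\kappa$, whereas the marginal modes, which alone require the delicate conservative energy of \cite{jfcX}, remain simple by Assumption \ref{assumption1} and are treated exactly as there. One then glues the near-crossing forms and the simple-root forms by a partition of unity of $(\cercle)^d$, verifies coercivity and the sign of $\mathcal D^{\,n}$ first at the finitely many points $\underline{\kappa}^{(k)}$ and propagates them to a neighborhood by continuity, and finally extracts uniform constants by compactness of $(\cercle)^d$.

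It remains to assemble the pieces. The whole-space balance law furnishes the Cauchy estimate for $v$, in particular the semigroup bound $\sup_{n}\,\Ng v^n \Nd_{-\infty,+\infty}^2\le C\sum_{\sigma=0}^s\Ng f^\sigma \Nd_{1-r_1,+\infty}^2$ together with the $\ell^2_n$ and dissipation terms appearing on the left-hand side of \eqref{estim1d}. The noncharacteristic Assumption \ref{assumption2} guarantees that the boundary traces of $v$ on the finite stencil $j_1=1-r_1,\dots,p_1$ are controlled by these norms, so the corrected boundary source driving $w$ is estimated by the right-hand side of \eqref{estim1d}; strong stability \eqref{stabilitenumibvp} then bounds $w$, including its own trace terms. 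Adding the bounds for $v$ and $w$ and using $u=v+w$ yields \eqref{estim1d} with a constant independent of $\gamma$ and $\Delta t$; setting $F=g=0$ gives \eqref{estimsemigroup}, and setting $F=f=0$ gives the stated $\ell^2$ stability with respect to boundary data. The genuinely new step is the third paragraph: once the coercive energy has been built uniformly across the crossings, the remaining arguments are those of \cite{jfcX}.
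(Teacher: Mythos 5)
There is a genuine gap in your superposition step. You decompose $u=v+w$ where $v$ solves the \emph{pure} Cauchy problem on $\Z^d$ with extended initial data, and you then need to feed the boundary traces of $v$ (at $j_1=1-r_1,\dots,p_1$, say) into the strong stability estimate \eqref{stabilitenumibvp} as a corrected boundary datum for $w$. That estimate measures boundary data in the norm $\sum_n \Delta t\, {\rm e}^{-2\gamma n \Delta t}\, \| \cdot \|_{\ell^2(\Z^{d-1})}^2$ \emph{without} any factor of $\gamma$, so you need a trace estimate of the form $\sum_n \Delta t\, {\rm e}^{-2\gamma n\Delta t} \sum_{j_1=1-r_1}^{P_1} \| v^n_{j_1,\cdot}\|_{\ell^2(\Z^{d-1})}^2 \le C \sum_\sigma \Ng f^\sigma \Nd^2$ uniformly as $\gamma\to 0$. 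The Cauchy estimate of Proposition \ref{prop2} only yields $\sup_n \Ng v^n\Nd^2$ and hence $\frac{\gamma}{\gamma\Delta t+1}\sum_n \Delta t\,{\rm e}^{-2\gamma n\Delta t}\Ng v^n\Nd^2$; the factor $\gamma$ cannot be removed for a fixed $j_1$-strip (think of a nondecaying unitary mode of the whole-space scheme). Your appeal to Assumption \ref{assumption2} here is not a proof: that assumption controls the structure of the companion matrices, but it does not turn an $\ell^\infty_n$ bound into an $\ell^2_n$ trace bound for the free Cauchy evolution. This trace estimate is in fact the hardest part of the whole argument, and it is exactly what your decomposition fails to provide.

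The paper's way around this is the key construction you are missing: instead of the pure Cauchy problem, the initial data are absorbed by the auxiliary problem \eqref{numabsorbing}, in which the multiplier $M$ itself is imposed as a \emph{dissipative boundary condition} $M u^n_j = 0$ on the half-space $j_1\le 0$. Applying the energy--dissipation balance to that problem produces boundary dissipation terms, and the resulting trace bound \eqref{estimabsorb} is established via Lemma \ref{lem3}, which is a delicate compactness/contradiction argument using Assumption \ref{assumption2}, the Gauss--Lucas theorem (to show the matrix $\M(z,\eta)$ of \eqref{defM} has no eigenvalue on $\cercle$ for $z\in\Ubar$ --- the precise place where the new form of Assumption \ref{assumption1}, forbidding multiple roots of \eqref{dispersion} on $\cercle$, is needed), and the spectral splitting of $\LL$ and $\M$. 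None of this appears in your proposal. A secondary issue: for the $w$-piece you invoke only strong stability, but \eqref{stabilitenumibvp} has no $\sup_n$ term on its left-hand side, so the $\ell^\infty_n(\ell^2_j)$ part of \eqref{estim1d} for the zero-initial-data problem also requires the multiplier identity \eqref{egaliteLG} on the half-space, not strong stability alone. Your third paragraph (the construction of the coercive energy across the crossings via Riesz projectors on the strictly stable cluster) is a plausible alternative to the paper's Hermite-interpolation construction and is not where the difficulty lies; the missing content is Theorem \ref{absorbing} and Lemma \ref{lem3}.
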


\noindent Sections \ref{section2} and \ref{section3} below are devoted to the proof of Theorem \ref{mainthm}. We follow the lines of \cite{jfcX} and first explain why 
the same multiplier as in \cite{jfcX} yields an energy-dissipation balance law for the Cauchy problem (in the whole space) in the broader framework of Assumption 
\ref{assumption1}. The analysis relies on a suitable construction of the energy and dissipation functionals, which are more involved than in \cite{jfcX}. The end of 
the proof of Theorem \ref{mainthm} follows \cite{jfcX} almost word for word. We explain where the specificity of the broader framework of Assumption \ref{assumption1} 
comes into play. In an Appendix, we deal with the specific case $s=1$ (recurrence relations with two time levels) for which energy and dissipation functionals with 
\emph{local densities} can be constructed. This gives hope to later deal with finite volume space discretization techniques on unstructured meshes.

\section{The Leray-G{\aa}rding method for fully discrete Cauchy problems}
\label{section2}

This section is devoted to proving stability estimates for discretized Cauchy problems in the whole space $\Z^d$, which is the first step before considering the 
discretized initial boundary value problem \eqref{numibvp}. More precisely, we consider the simpler case of the whole space $j \in \Z^d$, and the recurrence 
relation in $\ell^2(\Z^d)$:
\begin{equation}
\label{numcauchy}
\begin{cases}
{\dps \sum_{\sigma=0}^{s+1}} Q_\sigma \, u_j^{n+\sigma} =0 \, ,& 
j\in \Z^d \, ,\quad n\ge 0 \, ,\\
u_j^n = f_j^n \, ,& j\in \Z^d \, ,\quad n=0,\dots,s \, ,
\end{cases}
\end{equation}
where the operators $Q_\sigma$ are given by \eqref{defop}. We recall that in \eqref{defop}, the $a_{\ell,\sigma}$ are real numbers and are independent of the 
small parameter $\Delta t$ (they may depend on the CFL parameters $\lambda_1,\dots,\lambda_d$), while ${\bf S}$ denotes the shift operator on the space 
grid: $({\bf S}^\ell v)_j := v_{j+\ell}$ for $j,\ell \in \Z^d$. Stability of \eqref{numcauchy} is defined as follows.

\begin{definition}[Stability for the discrete Cauchy problem]
\label{def2}
The numerical scheme defined by \eqref{numcauchy} is ($\ell^2$-) stable if $Q_{s+1}$ is an isomorphism from $\ell^2 (\Z^d)$ onto itself, and if furthermore there 
exists a constant $C_0>0$ such that for all $\Delta t \in \, (0,1]$, for all initial conditions $f^0,\dots,f^s \in \ell^2 (\Z^d)$, there holds
\begin{equation}
\label{estimcauchy}
\sup_{n \in \N} \, \Nd u^n \Nd_{-\infty,+\infty}^2 \, \le \, C_0 \, \sum_{\sigma=0}^s \, \Ng f^\sigma \Nd_{-\infty,+\infty}^2 \, .
\end{equation}
\end{definition}

\noindent Let us quickly recall, see e.g. \cite{gko}, that stability in the sense of Definition \ref{def2} is in fact independent of $\Delta t \in (0,1]$ (because 
\eqref{numcauchy} nowhere involves $\Delta t$ and the norms in \eqref{estimcauchy} can be simplified on either side by the cell volume $\prod_k \Delta x_k$), 
and can be characterized in terms of the uniform power boundedness of the so-called amplification matrix
\begin{equation}
\label{defA2pas}
{\mathcal A}(\kappa) \, := \, \begin{bmatrix}
-\widehat{Q_s}(\kappa)/\widehat{Q_{s+1}}(\kappa) & \dots & \dots & -\widehat{Q_0}(\kappa)/\widehat{Q_{s+1}}(\kappa) \\
1 & 0 & \dots & 0 \\
0 & \ddots & \ddots & \vdots \\
0 & 0 & 1 & 0 \end{bmatrix} \in {\mathcal M}_{s+1}(\C) \, ,
\end{equation}
where the $\widehat{Q_\sigma}(\kappa)$'s are defined in \eqref{dispersion} and where it is understood that ${\mathcal A}$ is defined on the largest open set of 
$\C^d$ on which $\widehat{Q_{s+1}}$ does not vanish. Let us also recall that if $Q_{s+1}$ is an isomorphism from $\ell^2(\Z^d)$ onto itself, then $\widehat{Q_{s+1}}$ 
does not vanish on $(\cercle)^d$, and therefore does not vanish on an open neighborhood of $(\cercle)^d$. With the above definition \eqref{defA2pas} for 
${\mathcal A}$, the following well-known result holds, see e.g. \cite{gko}:

\begin{proposition}[Characterization of stability for the fully discrete Cauchy problem]
\label{prop1}
Assume that $Q_{s+1}$ is an isomorphism from $\ell^2(\Z^d)$ onto itself. Then the scheme \eqref{numcauchy} is stable in the sense of Definition \ref{def2} if and 
only if there exists a constant $C_1>0$ such that the amplification matrix ${\mathcal A}$ in \eqref{defA2pas} satisfies
\begin{equation}
\label{unifpowerbound2}
\forall \, n \in \N \, ,\quad \forall \, \kappa \in (\cercle)^d \, ,\quad \left\| \, {\mathcal A}(\kappa)^n \, \right\| \, \le \, C_1 \, .
\end{equation}
In particular, the spectral radius of ${\mathcal A}(\kappa)$ should not be larger than $1$ (the so-called von Neumann condition).
\end{proposition}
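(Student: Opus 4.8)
The plan is to pass to the discrete Fourier transform and to recognize $\mathcal{A}(\kappa)^n$ as the symbol of the solution operator of \eqref{numcauchy}. For $v \in \ell^2(\Z^d)$ I write $\widehat{v}(\xi) := \sum_{j \in \Z^d} v_j \, {\rm e}^{-i\, j \cdot \xi}$ for $\xi \in [-\pi,\pi]^d$, and set $\kappa = {\rm e}^{i\xi} := ({\rm e}^{i\xi_1},\dots,{\rm e}^{i\xi_d}) \in (\cercle)^d$. By Plancherel's theorem, $\Nd v \Nd_{-\infty,+\infty}^2$ equals $\int_{[-\pi,\pi]^d} |\widehat{v}(\xi)|^2 \, {\rm d}\xi$ up to the fixed factor $\prod_{k} \Delta x_k / (2\pi)^d$, which cancels on both sides of \eqref{estimcauchy}; I may therefore work with this $L^2$ norm on the torus throughout, and the $\Delta t$-independence noted after Definition \ref{def2} is automatic since $\Delta t$ never enters. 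Applying $\widehat{\cdot}$ to the first line of \eqref{numcauchy} turns it, for each fixed $\xi$, into the scalar recurrence $\sum_{\sigma=0}^{s+1} \widehat{Q_\sigma}(\kappa) \, \widehat{u}^{\,n+\sigma}(\xi) = 0$. Since $Q_{s+1}$ is an isomorphism, $\widehat{Q_{s+1}}$ does not vanish on a neighborhood of $(\cercle)^d$, so this recurrence solves for $\widehat{u}^{\,n+s+1}$; introducing the companion vector $U^n(\xi) := (\widehat{u}^{\,n+s}(\xi),\dots,\widehat{u}^{\,n}(\xi)) \in \C^{s+1}$, a direct computation using \eqref{defA2pas} gives $U^{n+1}(\xi) = \mathcal{A}(\kappa)\, U^n(\xi)$, hence $U^n(\xi) = \mathcal{A}(\kappa)^n \, U^0(\xi)$ with $U^0(\xi) = (\widehat{f}^{\,s}(\xi),\dots,\widehat{f}^{\,0}(\xi))$.

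For the easy implication, I would assume \eqref{unifpowerbound2} and use the pointwise bound $|\widehat{u}^{\,n}(\xi)| \le |U^n(\xi)| = |\mathcal{A}(\kappa)^n U^0(\xi)| \le C_1 \, |U^0(\xi)|$, together with $|U^0(\xi)|^2 = \sum_{\sigma=0}^s |\widehat{f}^{\,\sigma}(\xi)|^2$. Integrating in $\xi$ and invoking Plancherel on each side yields $\Nd u^n \Nd_{-\infty,+\infty}^2 \le C_1^2 \sum_{\sigma=0}^s \Nd f^\sigma \Nd_{-\infty,+\infty}^2$ uniformly in $n$, which is \eqref{estimcauchy} with $C_0 = C_1^2$.

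The converse is the substantive direction, and the localization step there is the main obstacle. Assuming \eqref{estimcauchy}, I first note that summing it over the $s+1$ consecutive indices $n,\dots,n+s$ controls the whole companion vector: $\int |U^n(\xi)|^2 \, {\rm d}\xi = \sum_{m=n}^{n+s} \int |\widehat{u}^{\,m}(\xi)|^2 \, {\rm d}\xi \le (s+1)\, C_0 \int |U^0(\xi)|^2 \, {\rm d}\xi$. To upgrade this integrated bound to the pointwise operator bound \eqref{unifpowerbound2}, I would fix $n_0 \in \N$ and $\underline{\kappa} = {\rm e}^{i\underline{\xi}} \in (\cercle)^d$, pick a unit vector $V \in \C^{s+1}$ realizing $|\mathcal{A}(\underline{\kappa})^{n_0} V| = \| \mathcal{A}(\underline{\kappa})^{n_0} \|$, and test \eqref{estimcauchy} against initial data with transforms $\widehat{f}^{\,\sigma}(\xi) = \widehat{\phi}(\xi) \, V_{s+1-\sigma}$, where $\widehat{\phi}$ is an $L^2$ bump supported in a shrinking neighborhood of $\underline{\xi}$. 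Then $U^0(\xi) = \widehat{\phi}(\xi)\, V$, so $\int |U^0|^2 = \int |\widehat{\phi}|^2$, while $\int |U^{n_0}(\xi)|^2 \, {\rm d}\xi = \int |\widehat{\phi}(\xi)|^2 \, |\mathcal{A}({\rm e}^{i\xi})^{n_0} V|^2 \, {\rm d}\xi$. Because $\kappa \mapsto \mathcal{A}(\kappa)$ is continuous on a neighborhood of $(\cercle)^d$ (again as $\widehat{Q_{s+1}}$ stays away from zero), a standard approximate-identity argument shows that, as the support of $\widehat{\phi}$ shrinks to $\underline{\xi}$, the weighted average $\int |U^{n_0}|^2 / \int |\widehat{\phi}|^2$ converges to $|\mathcal{A}(\underline{\kappa})^{n_0} V|^2 = \| \mathcal{A}(\underline{\kappa})^{n_0} \|^2$. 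Feeding this into the integrated bound gives $\| \mathcal{A}(\underline{\kappa})^{n_0} \|^2 \le (s+1)\, C_0$, and since $n_0$ and $\underline{\kappa}$ are arbitrary, \eqref{unifpowerbound2} follows with $C_1 = \sqrt{(s+1)\,C_0}$. The delicate point is verifying that a Fourier-concentrated wave packet aligned with the worst-case direction $V$ genuinely saturates the amplification, which rests on the uniform continuity of $\mathcal{A}(\cdot)^{n_0}$ on $(\cercle)^d$ for each fixed $n_0$.

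Finally, the spectral-radius (von Neumann) statement is immediate from \eqref{unifpowerbound2}: if $z$ is an eigenvalue of $\mathcal{A}(\kappa)$ with eigenvector $W \ne 0$, then $|z|^n \, |W| = |\mathcal{A}(\kappa)^n W| \le C_1 \, |W|$ for every $n \in \N$, forcing $|z| \le 1$.
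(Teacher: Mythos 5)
Your proof is correct and is the classical argument (Fourier diagonalization into the companion recurrence for the easy direction, and a wave-packet concentration at a worst-case frequency and direction for the converse); the paper does not reproduce a proof of Proposition \ref{prop1} but cites it as well known from \cite{gko}, and your argument is essentially the standard one given there.
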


The eigenvalues of ${\mathcal A}(\kappa)$ are the roots to the dispersion relation \eqref{dispersion}. When these roots are simple for all $\kappa \in (\cercle)^d$, 
the von Neumann condition is both necessary and \emph{sufficient} for stability of  \eqref{numcauchy}, see, e.g., \cite[Proposition 3]{jfcnotes}. However, Assumption 
\ref{assumption1} is more general than the situation considered in \cite{jfcX} where the roots always remain simple. Nevertheless, since the occurence of a multiple 
root only occurs in the interior $\D$ and not on the boundary $\cercle$, we easily deduce from Assumption \ref{assumption1} that the matrix ${\mathcal A}(\kappa)$ 
in \eqref{defA2pas} is \emph{geometrically regular} in the sense of \cite[Definition 3]{jfcnotes}. Hence we can still apply \cite[Proposition 3]{jfcnotes} and conclude 
that Assumption \ref{assumption1} implies stability for the Cauchy problem \eqref{numcauchy} (in the sense of Definition \ref{def2}). As in \cite{jfcX}, our goal 
now is to derive the semigroup estimate \eqref{estimcauchy} not by applying Fourier transform to \eqref{numcauchy} and using uniform power boundedness of 
${\mathcal A}$, but rather by multiplying the first equation in \eqref{numcauchy} by a suitable \emph{local} multiplier. As a warm-up, and to make things as clear 
as possible, we first deal with the simpler case where one only considers the time evolution and no additional space variable (the standard recurrence relations 
in $\C$).

\subsection{Stable recurrence relations}

In this Paragraph, we consider sequences $(v^n)_{n \in \N}$ with values in $\C$. The index $n$ should be thought of as the discrete time variable, which is the reason 
why we always write $n$ as an exponent in order to be consistent with the notation used for discretized partial differential equations. Let then $\nu \ge 1$ and let 
$a_\nu,\dots,a_0$ be some complex numbers with $a_\nu \neq 0$ (in the next Paragraphs, we choose $\nu=s+1$). It is well known that all solutions $(v^n)_{n \in \N}$ 
to the recurrence relation
$$
\forall \, n \in \N \, ,\quad a_\nu \, v^{n+\nu} \, + \, \cdots \, + \, a_0 \, v^n \, = \, 0 \, ,
$$
are bounded if and only if the polynomial:
\begin{equation}
\label{defP}
\mathbb{P}(X) \, := \, a_\nu \, X^\nu \, + \, \cdots \, + \, a_1 \, X  \, + \, a_0 \, ,
\end{equation}
has all its roots in $\Dbar$ and the roots on $\cercle$ are simple, see \cite[chapter III.3]{hnw}. This is equivalent to requiring that the companion matrix (compare with 
\eqref{defA2pas}):
$$
\begin{bmatrix}
-a_{\nu-1}/a_\nu & \dots & \dots & -a_0/a_\nu \\
1 & 0 & \dots & 0 \\
0 & \ddots & \ddots & \vdots \\
0 & 0 & 1 & 0 
\end{bmatrix} \, \in \, \mathcal{M}_\nu (\C) \, ,
$$
be power bounded. In that case, the Kreiss matrix Theorem \cite{strikwerda-wade} implies that the latter matrix is a contraction (it has a norm $\le 1$) for some Hermitian 
norm on $\C^\nu$. In \cite{jfcX}, we have obtained some explicit construction of such a Hermitian norm and an associated dissipation functional in the case where \emph{all 
the roots} of $\mathbb{P}$ in \eqref{defP} are \emph{simple} and located in $\Dbar$. The construction is based on a multiplier technique which is the discrete analogue of 
\cite[Lemme 1.1]{garding}. The inconvenience of the result in \cite{jfcX} is that even the roots in $\D$, which are associated with an exponentially decaying behavior in time, 
are assumed to be simple. We suppress this technical assumption here and explain why the multiplier technique developed in \cite{jfcX} allows to deal with the general case 
with multiple roots in $\D$.

As in \cite{jfcX}, we introduce the notation ${\bf T}$ for the shift operator in time, that is, for any sequence $(v^n)_{n \in \N}$, we define: $({\bf T}^m v)^n :=v^{n+m}$ for all 
$m,n \in \N$. The following Lemma is an extension of \cite[Lemma 1]{jfcX}.

\begin{lemma}[The energy-dissipation balance law for recurrence relations]
\label{lem1}
Let $P \in \C [X]$ be a polynomial of degree $\nu$, $\nu \ge 1$, that satisfies the following two properties:
\begin{itemize}
 \item If $P(z)=0$, then $z \in \Dbar$.
 \item If $P(z)=0$ and $z \in \cercle$, then $z$ is a simple root of $P$.
\end{itemize}
Then there exists a positive definite Hermitian form $q_e$ on $\C^\nu$, and a nonnegative Hermitian form $q_d$ on $\C^\nu$ such that for any sequence $(v^n)_{n \in \N}$ 
with values in $\C$, there holds:
\begin{multline*}
\forall \, n \in \N \, , \\
2 \, \text{\rm Re} \, \Big( \overline{{\bf T} \, (P'({\bf T}) \, v^n)} \, P({\bf T}) \, v^n \Big) \, = \, \nu \, |P({\bf T}) \, v^n|^2 \, + \, ({\bf T}-I) \, \big( q_e(v^n,\dots,v^{n+\nu-1}) \big) 
\, + \, q_d(v^n,\dots,v^{n+\nu-1}) \, .
\end{multline*}
In particular, for any sequence $(v^n)_{n \in \N}$ that satisfies the recurrence relation
\begin{equation*}
\forall \, n \in \N \, ,\quad P({\bf T}) \, v^n \, = \, 0 \, ,
\end{equation*}
the sequence $(q_e(v^n,\dots,v^{n+\nu-1}))_{n\in \N}$ is nonincreasing.
\end{lemma}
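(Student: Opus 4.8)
The plan is to reduce the asserted identity between Hermitian forms on $\C^\nu$ to a single scalar identity holding for every $z \in \C$, and then to build $q_e,q_d$ explicitly from the roots of $P$. The decisive preliminary observation is that a Hermitian form $q(V)=V^*HV$ on $\C^\nu$ is completely determined by the scalar function $z\mapsto q(1,z,\dots,z^{\nu-1})=\sum_{0\le k,l\le \nu-1}H_{kl}\,\overline z^{\,k}z^l$, because the monomials $\overline z^{\,k}z^l=r^{k+l}\mathrm{e}^{i(l-k)\theta}$ are linearly independent functions of $z$. Hence it suffices to produce $E(z)$ and $D(z)$, each a finite sum $\sum_m|R_m(z)|^2$ with $R_m\in\C[z]$ of degree at most $\nu-1$ (so that they lift to genuine Hermitian forms on $\C^\nu$), such that the balance law, tested on the pure mode $v^n=z^n$, holds for all $z$. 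Indeed, for $v^n=z^n$ one has $P({\bf T})v^n=P(z)z^n$ and ${\bf T}(P'({\bf T})v^n)=P'(z)z^{n+1}$, while the telescoping term $({\bf T}-I)q_e$ contributes exactly the factor $(|z|^2-1)$; after dividing by $|z|^{2n}$ the law becomes
\[
2\,\mathrm{Re}\big(\overline{z\,P'(z)}\,P(z)\big)\,=\,\nu\,|P(z)|^2+(|z|^2-1)\,E(z)+D(z)\,.
\]

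Next I would compute the defect $T(z):=2\,\mathrm{Re}(\overline{zP'(z)}P(z))-\nu|P(z)|^2$ through the logarithmic derivative. Writing $zP'(z)=P(z)\sum_j z/(z-z_j)$, with the sum over the roots $z_j$ of $P$ counted with multiplicity, and using $2\,\mathrm{Re}(z/(z-z_j))-1=(|z|^2-|z_j|^2)/|z-z_j|^2$, one obtains the clean formula $T(z)=\sum_j(|z|^2-|z_j|^2)\,|P(z)/(z-z_j)|^2$. The splitting $|z|^2-|z_j|^2=(|z|^2-1)+(1-|z_j|^2)$ suggests the first candidate $E_0(z)=\sum_j|P(z)/(z-z_j)|^2$ and $D_0(z)=\sum_j(1-|z_j|^2)|P(z)/(z-z_j)|^2$. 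Since all roots lie in $\Dbar$ we have $1-|z_j|^2\ge0$, whence $D_0\ge0$ (and the simple unimodular roots contribute nothing, as they must); both $E_0$ and $D_0$ are sums of squared moduli of degree-$(\nu-1)$ polynomials, hence lift to Hermitian forms. When all roots are simple this reproduces \cite[Lemma 1]{jfcX}: the $\nu$ polynomials $P/(z-z_j)$ attached to the distinct roots are then linearly independent, so $E_0$ is already positive definite.

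The main obstacle is that when $P$ has a multiple root the polynomials $P/(z-z_j)$ coincide for repeated roots, so $E_0$ has rank strictly below $\nu$ and fails to be positive definite. Here I would exploit the second hypothesis of the Lemma: a unimodular root is simple, so every multiple root $\zeta_i$ satisfies $|\zeta_i|<1$, giving a strictly positive dissipation reserve $1-|\zeta_i|^2>0$. I would enrich the energy with the missing partial-fraction squares, setting $E:=E_0+\sum_{i:\,m_i\ge2}\sum_{k=2}^{m_i}\lambda_{i,k}\,|P(z)/(z-\zeta_i)^k|^2$ with small weights $\lambda_{i,k}>0$, and $D:=D_0-\sum_{i,k\ge2}\lambda_{i,k}(|z|^2-1)|P(z)/(z-\zeta_i)^k|^2$, so that the scalar identity is preserved by construction. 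Two facts then close the argument. First, $zP/(z-\zeta_i)^k$ has degree $\nu-k+1\le\nu-1$ for $k\ge2$, so $(|z|^2-1)|P/(z-\zeta_i)^k|^2=|zP/(z-\zeta_i)^k|^2-|P/(z-\zeta_i)^k|^2$ is a difference of squared moduli of degree-$(\nu-1)$ polynomials, hence a bona fide Hermitian form; this is exactly why only orders $k\ge2$ are usable, the order $k=1$ being consumed by the telescoping term. Since $\{P/(z-\zeta_i)^k\}_{i,\,1\le k\le m_i}$ is the partial-fraction basis of the degree-$(\nu-1)$ polynomials, any positive weights render $E$ positive definite. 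Second, I would check $D\ge0$: for $|z|\le1$ the perturbation is nonnegative, so $D\ge D_0\ge0$; for $|z|\ge1$ I would match, root by root, the reserve $m_i(1-|\zeta_i|^2)|P/(z-\zeta_i)|^2$ in $D_0$ against the perturbation attached to the same $\zeta_i$, using $|P/(z-\zeta_i)|^2=|z-\zeta_i|^{2(k-1)}|P/(z-\zeta_i)^k|^2$ and the fact that $(|z|^2-1)/|z-\zeta_i|^{2(k-1)}$ stays bounded on $\{|z|\ge1\}$, because there $|z-\zeta_i|\ge1-|\zeta_i|>0$ and the ratio has a finite limit at infinity. Taking the $\lambda_{i,k}$ small enough makes each such bracket nonnegative uniformly in $z$, so $D\ge0$ globally. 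Transporting $E$ and $D$ back to Hermitian forms $q_e$ (positive definite) and $q_d$ (nonnegative) on $\C^\nu$ yields the balance law, and the monotonicity statement is then immediate: for a solution of $P({\bf T})v=0$ both the left-hand side and the $\nu|P({\bf T})v^n|^2$ term vanish, leaving $q_e(v^{n+1},\dots,v^{n+\nu})-q_e(v^n,\dots,v^{n+\nu-1})=-q_d(v^n,\dots,v^{n+\nu-1})\le0$.
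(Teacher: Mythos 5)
Your construction coincides, term for term, with the paper's: your $E_0,D_0$ are the forms built on the polynomials $P/(X-z_j)$ (roots counted with multiplicity), the polynomials $P/(X-\zeta_i)^k$, $k\ge 2$, that you adjoin are exactly the paper's $Q_{k,j}$, and the ``add to the energy, subtract the telescoped difference from the dissipation'' mechanism is the same. The derivation of the defect identity through the logarithmic derivative is correct and pleasantly short, the reduction of the \emph{identity} to the scalar statement on the curve $(1,z,\dots,z^{\nu-1})$ is legitimate (the functions $\overline{z}^{\,k}z^l$ are indeed linearly independent, so they determine the Hermitian matrix), and the positive definiteness of $q_e$ is correctly argued, since it is a sum of squares over a family spanning $\C_{\nu-1}[X]$.

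The gap is in the nonnegativity of $q_d$, which is the crux of the lemma. You only verify that the scalar function $D(z)=q_d(1,z,\dots,z^{\nu-1})$ is nonnegative; but for $\nu\ge 3$ a Hermitian form that is nonnegative on the rational normal curve need not be nonnegative on $\C^\nu$. Restriction to the curve determines the form (an equality statement) but does not control its signature (an inequality statement): for example $q(w^0,w^1,w^2):=|w^1|^2-\text{\rm Re}\,(\overline{w^0}\,w^2)$ satisfies $q(1,z,z^2)=|z|^2-\text{\rm Re}\,(z^2)\ge 0$ for all $z$, yet $q(1,0,1)=-1$. The lemma genuinely needs $q_d\ge 0$ on all of $\C^\nu$: a solution of $P({\bf T})v=0$ with several modes excited produces state vectors $(v^n,\dots,v^{n+\nu-1})$ that fill out $\C^\nu$, and in Proposition \ref{prop2} the forms are evaluated at Fourier transforms, which are arbitrary vectors. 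Moreover your verification cannot be lifted off the curve, because it rests on the scalar factorization $|P(z)/(z-\zeta_i)|^2=|z-\zeta_i|^{2(k-1)}\,|P(z)/(z-\zeta_i)^k|^2$ and on the boundedness of $(|z|^2-1)/|z-\zeta_i|^{2(k-1)}$ on $\{|z|\ge 1\}$, neither of which has an analogue for a general vector $w\in\C^\nu$. The paper closes precisely this point by arguing with general $w$: it uses the polynomial recursion $X\,Q_{k,j}=Q_{k,j+1}+z_k\,Q_{k,j}$, Young's inequality, and a telescoping sum over $j$, which is what forces the specific weights $\varepsilon^{\mu_k-j}(1-|z_k|^2)^{2(\mu_k-j)}$ with $\varepsilon\le 1/4$. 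You would need to replace your scalar estimate by an argument of that type (your choice of weights may then also have to change).
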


\noindent The multiplier ${\bf T} \, P'({\bf T}) \, v^n$ used in Lemma \ref{lem1} is the same as in \cite{jfcX}. We shall see below in the proof why the expressions provided 
in \cite{jfcX} for the energy and dissipation functions $q_e,q_d$ can not cover the case of multiple roots and how they should be modified.

\begin{proof}
Let us first recall the proof in \cite{jfcX} in the case of \emph{simple} roots because this is the starting point for the general case we consider here. We therefore assume 
for now that $P$ has degree $\nu$ and only has simple roots $z_1,\dots,z_\nu$ located in $\Dbar$. We write
$$
P(X) \, = \, a \, \prod_{j=1}^\nu \, (X \, - \, z_j) \, ,
$$
with $a \neq 0$, and introduce the Lagrange polynomials:
$$
\forall \, k \, = \, 1,\dots,\nu \, ,\quad P_k(X) \, := \, a \, \prod_{\substack{j=1 \\ j \neq k}}^\nu \, (X \, - \, z_j) \, .
$$
Since the $z_j$'s are pairwise distinct, the $P_k$'s form a basis of $\C_{\nu-1}[X]$. Moreover, the following relation was obtained in \cite{jfcX}:
\begin{equation}
\label{enerdissip1}
2 \, \text{\rm Re} \, \Big( \overline{{\bf T} \, (P'({\bf T}) \, v^n)} \, P({\bf T}) \, v^n \Big) \, - \, \nu \, |P({\bf T}) \, v^n|^2 
\, = \, ({\bf T}-I) \, \left\{ \, \sum_{k=1}^\nu \, |P_k({\bf T}) \, v^n|^2 \, \right\} \, + \, \sum_{k=1}^\nu \, \big( 1 \, - \, |z_k|^2 \big) \, |P_k({\bf T}) \, v^n|^2 \, .
\end{equation}
The conclusion of Lemma \ref{lem1} is then obtained by introducing the energy ($q_e$) and dissipation ($q_d$) forms:
\begin{align}
\forall \, (w^0,\dots,w^{\nu-1}) \in \C^\nu \, ,\quad q_e(w^0,\dots,w^{\nu-1}) & \, := \, \sum_{k=1}^\nu \, |P_k({\bf T}) \, w^0|^2 \, ,\label{defqe}\\
q_d(w^0,\dots,w^{\nu-1}) & \, := \, \sum_{k=1}^\nu \, (1 \, - \, |z_k|^2) \, |P_k({\bf T}) \, w^0|^2 \, .\label{defqd}
\end{align}
When the roots of $P$ are located in $\Dbar$, $q_d$ is obviously nonnegative (this property does not depend on the fact that the roots are simple). When furthermore the 
roots of $P$ are simple, the $P_k$'s form a basis of $\C_{\nu-1}[X]$ and $q_e$ is positive definite. The conclusion follows.
\bigskip

We now turn to the general case and therefore no longer assume that the roots of $P$ in $\D$ are simple. For the sake of clarity, we label the pairwise distinct roots of $P$ 
as $z_1,\dots,z_m$ and let $\mu_1,\dots,\mu_m$ denote the corresponding multiplicities. We thus have:
$$
P(X) \, = \, a \, \prod_{j=1}^m \, (X \, - \, z_j)^{\mu_j} \, ,
$$
for some $a \neq 0$, and we introduce the polynomials:
$$
\forall \, k \, = \, 1,\dots,m \, ,\quad P_k(X) \, := \, a \, (X \, - \, z_k)^{\mu_k-1} \, \prod_{\substack{j=1 \\ j \neq k}}^m \, (X \, - \, z_j)^{\mu_j} \, .
$$
We thus get the relation:
$$
P' \, = \, \sum_{k=1}^m \, \mu_k \, P_k \, ,
$$
and it is a simple exercise to adapt the computation in \cite{jfcX} to obtain the relation (compare with \eqref{enerdissip1}):
\begin{multline}
\label{enerdissip2}
2 \, \text{\rm Re} \, \Big( \overline{{\bf T} \, (P'({\bf T}) \, v^n)} \, P({\bf T}) \, v^n \Big) \, - \, \nu \, |P({\bf T}) \, v^n|^2 
\, = \, ({\bf T}-I) \, \left\{ \, \sum_{k=1}^m \, \mu_k \, |P_k({\bf T}) \, v^n|^2 \, \right\} \\
\, + \, \sum_{k=1}^m \, \mu_k \, (1 \, - \, |z_k|^2) \, |P_k({\bf T}) \, v^n|^2 \, .
\end{multline}
The problem which we are facing is that there are too few polynomials $P_k$ to span the whole space $\C_{\nu-1}[X]$. The trick consists in adding to the energy part on 
the right hand side of \eqref{enerdissip2} some nonnegative Hermitian forms in order to gain positive definiteness, while still keeping the corresponding dissipation form 
nonnegative. This ``add and subtract'' trick is performed below.
\bigskip

As long as a root $z_k$ is at least double ($\mu_k \ge 2$), we introduce the polynomials:
$$
\forall \, j \, = \, 1,\dots,\mu_k-1 \, ,\quad Q_{k,j}(X) \, := \, a \, (X \, - \, z_k)^{j-1} \, \prod_{\substack{ \ell=1 \\ \ell \neq k}}^m \, (X \, - \, z_\ell)^{\mu_\ell} \, ,
$$
each of which being of degree $\le \nu-2$. (Later we shall use the fact that $X \, Q_{k,j}(X)$ has degree $\le \nu-1$.) We go back to \eqref{enerdissip2} 
and add/subtract suitable quantities as follows:
\begin{align}
2 \, \text{\rm Re} \, \Big( \overline{{\bf T} \, (P'({\bf T}) \, v^n)} \, & \, P({\bf T}) \, v^n \Big) \, - \, \nu \, |P({\bf T}) \, v^n|^2 \notag \\
&= \, ({\bf T}-I) \, \left\{ \, \sum_{k=1}^m \, \mu_k \, |P_k({\bf T}) \, v^n|^2 
\, + \, \sum_{k=1}^m \, \sum_{j=1}^{\mu_k-1} \, \varepsilon^{\mu_k-j} \, (1 \, - \, |z_k|^2)^{2(\mu_k-j)} \, |Q_{k,j}({\bf T}) \, v^n|^2 \, \right\} \label{enerdissip3} \\
&\quad + \, \sum_{k=1}^m \, \mu_k \, (1 \, - \, |z_k|^2) \, |P_k({\bf T}) \, v^n|^2 \notag \\
&\quad + \, \sum_{k=1}^m \, \sum_{j=1}^{\mu_k-1} \, \varepsilon^{\mu_k-j} \, (1 \, - \, |z_k|^2)^{2(\mu_k-j)} \, 
\left( \, |Q_{k,j}({\bf T}) \, v^n|^2 \, - \, |Q_{k,j}({\bf T}) \, v^{n+1}|^2 \, \right) \, ,\notag
\end{align}
where $\varepsilon>0$ is a parameter to be fixed later on (any choice $0<\varepsilon \le 1/4$ will do). In \eqref{enerdissip3}, it is understood that if $\mu_k=1$ 
(that is, if the root $z_k$ is simple), then we do not add any polynomial $Q_{k,j}$, the range of indices $1 \le j \le \mu_k-1$ being empty. Moreover, we recall 
that if $\mu_k \ge 2$ for some $k$, then we have $|z_k|<1$ so the coefficient of the Hermitian form $|Q_{k,j}({\bf T}) \, w^0|^2$ on the second line of 
\eqref{enerdissip3} will be positive.

It remains to show that for some suitably chosen parameter $\varepsilon>0$, the decomposition \eqref{enerdissip3} yields the result of Lemma \ref{lem1}. 
Let us first observe that the $\nu$ polynomials
$$
Q_{1,1} \, , \, \dots \, , \, Q_{1,\mu_1-1} \, , \, P_1 \, , \, \dots \, , \, Q_{m,1} \, , \, \dots \, , \, Q_{m,\mu_m-1} \, , \, P_m \, ,
$$
span the space $\C_{\nu-1}[X]$ (this is nothing but the classical Hermite interpolation problem). Since the quantity $1-|z_k|^2$ is positive as long as $\mu_k$ is 
larger than $2$, any choice $\varepsilon>0$ will make the Hermitian form $q_e$ defined on $\C^\nu$ by:
\begin{multline}
\label{defqemult}
\forall \, (w^0,\dots,w^{\nu-1}) \in \C^\nu \, ,\\
q_e(w^0,\dots,w^{\nu-1}) \, := \, \sum_{k=1}^m \, \mu_k \, |P_k({\bf T}) \, w^0|^2 
\, + \, \sum_{k=1}^m \, \sum_{j=1}^{\mu_k-1} \, \varepsilon^{\mu_k-j} \, (1 \, - \, |z_k|^2)^{2(\mu_k-j)} \, |Q_{k,j}({\bf T}) \, w^0|^2 \, ,
\end{multline}
positive definite. We thus now define a Hermitian form $q_d$ on $\C^\nu$ by:
\begin{multline}
\label{defqdmult}
\forall \, (w^0,\dots,w^{\nu-1}) \in \C^\nu \, ,\quad q_d(w^0,\dots,w^{\nu-1}) \, := \, \sum_{k=1}^m \, \mu_k \, (1 \, - \, |z_k|^2) \, |P_k({\bf T}) \, w^0|^2 \\
\, + \, \sum_{k=1}^m \, \sum_{j=1}^{\mu_k-1} \, \varepsilon^{\mu_k-j} \, (1 \, - \, |z_k|^2)^{2(\mu_k-j)} \, \left( \, |Q_{k,j}({\bf T}) \, w^0|^2 \, - \, |Q_{k,j}({\bf T}) \, w^1|^2 \, \right) \, ,
\end{multline}
and we are going to show that a convenient choice of $\varepsilon$ makes $q_d$ nonnegative. (Let us observe here that it is crucial to have the degree of 
$Q_{k,j}$ less than $\nu-2$ so that the quantity $Q_{k,j}({\bf T}) \, w^1$ is a linear combination of $w^1,\dots,w^{\nu-1}$.) With the above definitions \eqref{defqemult} 
and \eqref{defqdmult} for $q_e$ and $q_d$, the energy balance law \eqref{enerdissip3} reads as claimed in Lemma \ref{lem1}, so the only remaining task 
is to show that $q_d$ is nonnegative for a convenient choice of $\varepsilon>0$.

We use below the convention $Q_{k,\mu_k} := P_k$, which is compatible with the above definition of $P_k$ and of the $Q_{k,j}$'s. Observing that there holds:
$$
\forall \, j \, = \, 1,\dots,\mu_k-1 \, ,\quad X \, Q_{k,j}(X) \, = \, Q_{k,j+1} \, + \, z_k \, Q_{k,j} \, ,
$$
we have for any $k=1,\dots,m$:
\begin{align*}
\sum_{j=1}^{\mu_k-1} \, \varepsilon^{\mu_k-j} \, & \, (1 \, - \, |z_k|^2)^{2(\mu_k-j)} \, \Big( \, |Q_{k,j}({\bf T}) \, w^0|^2 -|Q_{k,j}({\bf T}) \, w^1|^2 \, \Big) \\
&= \, \sum_{j=1}^{\mu_k-1} \, \varepsilon^{\mu_k-j} \, (1 \, - \, |z_k|^2)^{2(\mu_k-j)} \, \Big( \, 
|Q_{k,j}({\bf T}) \, w^0|^2 \, - \, |Q_{k,j+1}({\bf T}) \, w^0  \, + \, z_k \, Q_{k,j}({\bf T}) \, w^0|^2 \, \Big) \\
&= \, \sum_{j=1}^{\mu_k-1} \, \varepsilon^{\mu_k-j} \, (1 \, - \, |z_k|^2)^{2(\mu_k-j)} \, \Big( \, (1 \, - \, |z_k|^2) \, |Q_{k,j}({\bf T}) \, w^0|^2 \, - \, |Q_{k,j+1}({\bf T}) \, w^0|^2 \Big) \\
&\quad -\sum_{j=1}^{\mu_k-1} \, \varepsilon^{\mu_k-j} \, (1 \, - \, |z_k|^2)^{2(\mu_k-j)} \, 2 \, \text{\rm Re} \, \Big( \overline{z_k \, Q_{k,j}({\bf T}) \, w^0} \, Q_{k,j+1}({\bf T}) \, w^0 \Big) \, .
\end{align*}
We use Young's inequality as follows:
$$
\left| 2 \, \text{\rm Re} \, \Big( \overline{z_k \, Q_{k,j}({\bf T}) \, w^0} \, Q_{k,j+1}({\bf T}) \, w^0 \Big) \right| \, \le \, 
\dfrac{1}{2} \, (1 \, - \, |z_k|^2) \, |Q_{k,j}({\bf T}) \, w^0|^2 \, + \, \dfrac{2 \, |z_k|^2}{1 \, - \, |z_k|^2} \, |Q_{k,j+1}({\bf T}) \, w^0|^2 \, ,
$$
and thus derive the lower bound:
\begin{align*}
\sum_{j=1}^{\mu_k-1} \, \varepsilon^{\mu_k-j} \, (1 \, - \, |z_k|^2)^{2(\mu_k-j)} \, \Big( \, |Q_{k,j}({\bf T}) \, w^0|^2 &\, - \, |Q_{k,j}({\bf T}) \, w^1|^2 \, \Big) \\
&\ge \, \sum_{j=1}^{\mu_k-1} \, \dfrac{1}{2} \, \varepsilon^{\mu_k-j} \, (1 \, - \, |z_k|^2)^{2(\mu_k-j)+1} \, |Q_{k,j}({\bf T}) \, w^0|^2 \\
&\quad -\sum_{j=1}^{\mu_k-1} \, \varepsilon^{\mu_k-j} \, (1 \, - \, |z_k|^2)^{2(\mu_k-j)} \, \dfrac{1 \, + \, |z_k|^2}{1 \, - \, |z_k|^2} \, |Q_{k,j+1}({\bf T}) \, w^0|^2 \, .
\end{align*}
Shifting indices, we get:
\begin{align*}
\sum_{j=1}^{\mu_k-1} \, \varepsilon^{\mu_k-j} \, (1 \, - \, |z_k|^2)^{2(\mu_k-j)} \, \Big( \, |Q_{k,j}({\bf T}) \, w^0|^2 &\, - \, |Q_{k,j}({\bf T}) \, w^1|^2 \, \Big) \\
&\ge \, \sum_{j=0}^{\mu_k-2} \, \dfrac{1}{2 \, \varepsilon} \, \varepsilon^{\mu_k-j} \, (1 \, - \, |z_k|^2)^{2(\mu_k-j)-1} \, |Q_{k,j+1}({\bf T}) \, w^0|^2 \\
&\quad -\sum_{j=1}^{\mu_k-1} \, \varepsilon^{\mu_k-j} \, (1 \, - \, |z_k|^2)^{2(\mu_k-j)} \, \dfrac{1 \, + \, |z_k|^2}{1 \, - \, |z_k|^2} \, |Q_{k,j+1}({\bf T}) \, w^0|^2 \, .
\end{align*}
Restricting from now on to $0<\varepsilon \le 1/4$, we have $1/(2 \, \varepsilon) \ge 2 \ge 1 \, + \, |z_k|^2$ and all terms corresponding to the indices $j=1,\dots,\mu_k-2$ 
in the above two sums match to give a nonnegative quantity (the first one for $j=0$ obviously gives a nonnegative contribution since it only appears in the first sum). 
Hence we can keep only the very last term corresponding to $j=\mu_k-1$ and we have thus derived the lower bound:
\begin{multline*}
\sum_{j=1}^{\mu_k-1} \, \varepsilon^{\mu_k-j} \, (1 \, - \, |z_k|^2)^{2(\mu_k-j)} \, \Big( \, |Q_{k,j}({\bf T}) \, w^0|^2 \, - \, |Q_{k,j}({\bf T}) \, w^1|^2 \, \Big) \\
\ge -\varepsilon \, (1 \, - \, |z_k|^2) \, (1 \, + \, |z_k|^2) \, |Q_{k,\mu_k}({\bf T}) \, w^0|^2 \ge -\dfrac{1}{2} \, (1 \, - \, |z_k|^2) \, |P_k({\bf T}) \, w^0|^2 \, ,
\end{multline*}
where we have used $|z_k| \le 1$ and $\varepsilon \le 1/4$ in the last inequality. Going back to the definition \eqref{defqdmult} of $q_d$, and summing over the $k$'s, 
we obtain that the Hermitian form $q_d$ is nonnegative for any choice of $\varepsilon$ within the interval $(0,1/4]$. The proof of Lemma \ref{lem1} is complete.
\end{proof}

\subsection{The energy-dissipation balance for finite difference schemes}

In this Paragraph, we consider the numerical scheme \eqref{numcauchy}. We introduce the following notation: 
\begin{equation}
\label{defLM}
L \, := \, \sum_{\sigma=0}^{s+1} \, {\bf T}^\sigma \, Q_\sigma \, ,\quad M \, := \, \sum_{\sigma=0}^{s+1} \, \sigma \, {\bf T}^\sigma \, Q_\sigma \, .
\end{equation}
Thanks to Fourier analysis, the following result will be a consequence of Lemma \ref{lem1}.

\begin{proposition}[The energy-dissipation balance law for finite difference schemes]
\label{prop2}
Let Assumptions \ref{assumption0} and \ref{assumption1} be satisfied. Then there exist a continuous coercive quadratic form $E$ and a continuous nonnegative 
quadratic form $D$ on $\ell^2(\Z^d;\R)^{s+1}$ such that for all sequences $(v^n)_{n \in \N}$ with values in $\ell^2(\Z^d;\R)$ and for all $n \in \N$, there holds
\begin{equation*}
2 \, \langle \, M \, v^n,L\, v^n \, \rangle_{-\infty,+\infty} \, = \, (s+1) \, \Ng \, L \, v^n \, \Nd_{-\infty,+\infty}^2 \, + \, ({\bf T}-I) \, E(v^n,\dots,v^{n+s}) 
\, + \, D(v^n,\dots,v^{n+s}) \, .
\end{equation*}
In particular, for any choice of initial data $f^0,\dots,f^s \in \ell^2(\Z^d;\R)$, the solution to \eqref{numcauchy} satisfies
\begin{equation*}
\sup_{n \in \N} \, E(u^n,\dots,u^{n+s}) \, \le \, E(f^0,\dots,f^s) \, ,
\end{equation*}
and \eqref{numcauchy} is ($\ell^2$-)stable.
\end{proposition}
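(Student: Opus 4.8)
The plan is to reduce Proposition \ref{prop2} to the scalar result of Lemma \ref{lem1} by a Fourier transform in the space variable, and then to integrate the resulting pointwise (in frequency) balance law over the torus $(\cercle)^d$. Since the operators $Q_\sigma$ act only in space and commute with the time shift ${\bf T}$, taking the space Fourier transform turns $L$ and $M$ into the Fourier multipliers associated, for $\kappa \in (\cercle)^d$, with the polynomials in ${\bf T}$ built from
\[
P_\kappa(z) \, := \, \sum_{\sigma=0}^{s+1} \widehat{Q_\sigma}(\kappa) \, z^\sigma \, , \qquad z \, P_\kappa'(z) \, = \, \sum_{\sigma=0}^{s+1} \sigma \, \widehat{Q_\sigma}(\kappa) \, z^\sigma \, ,
\]
namely $\widehat{L v^n}(\kappa) = P_\kappa({\bf T}) \, \widehat{v^n}(\kappa)$ and $\widehat{M v^n}(\kappa) = {\bf T} \, P_\kappa'({\bf T}) \, \widehat{v^n}(\kappa)$. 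By Assumption \ref{assumption0}, $\widehat{Q_{s+1}}$ does not vanish on $(\cercle)^d$, so $P_\kappa$ has degree exactly $s+1$ for every $\kappa \in (\cercle)^d$; and by Assumption \ref{assumption1} its roots lie in $\Dbar$ with those on $\cercle$ simple. Hence for each fixed $\kappa$ the polynomial $P_\kappa$ satisfies the hypotheses of Lemma \ref{lem1}, which provides a positive definite form $q_e(\kappa)$ and a nonnegative form $q_d(\kappa)$ (acting on the $(s+1)$-tuple of Fourier coefficients) realizing the pointwise balance law with $\nu = s+1$ and the multiplier ${\bf T} \, P_\kappa'({\bf T})$.

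The forms $E$ and $D$ would then be defined by integrating the pointwise forms against the measure on $(\cercle)^d$ for which Parseval's identity holds for $\Ng \cdot \Nd_{-\infty,+\infty}$, that is $E(w^0,\dots,w^s) := \int_{(\cercle)^d} q_e(\kappa)(\widehat{w^0}(\kappa),\dots,\widehat{w^s}(\kappa)) \, {\rm d}\kappa$ and similarly for $D$ with $q_d$. Integrating the identity of Lemma \ref{lem1} in $\kappa$, and using Parseval to identify $2 \, \langle M v^n, L v^n \rangle_{-\infty,+\infty}$ and $(s+1) \, \Ng L v^n \Nd_{-\infty,+\infty}^2$ with the integrals of the corresponding frequency quantities while the frequency-independent operator ${\bf T}-I$ passes through the integral, reproduces the claimed balance law. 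Nonnegativity of $D$ is then immediate from the pointwise nonnegativity of $q_d(\kappa)$.

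The step I expect to be the main obstacle is arranging that $q_e(\kappa), q_d(\kappa)$ depend on $\kappa$ well enough for $E$ to be \emph{coercive} and for $E, D$ to be \emph{continuous}; by Parseval this amounts to uniform-in-$\kappa$ upper bounds on both forms and a uniform lower bound on $q_e(\kappa)$ over the compact torus $(\cercle)^d$, and it is precisely here that Assumption \ref{assumption1}, rather than mere pointwise simplicity, is used. Away from the finitely many crossing points $\underline{\kappa}^{(1)},\dots,\underline{\kappa}^{(K)}$ all roots of $P_\kappa$ are simple and depend holomorphically on $\kappa$ (implicit function theorem), so the simple-root forms \eqref{defqe}--\eqref{defqd} are continuous in $\kappa$ and positive definite there; but they degenerate as $\kappa \to \underline{\kappa}^{(k)}$. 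Near each crossing I would instead run the multiple-root construction of Lemma \ref{lem1} organized around the \emph{group} of colliding branches: using the holomorphic factor $\Pi_k(X,\kappa) := \prod_{i=1}^{m_k} (X - z_i(\kappa))$ furnished by Assumption \ref{assumption1} (whose coefficients are holomorphic in $\kappa$, being symmetric functions of the branches $z_1(\kappa),\dots,z_{m_k}(\kappa)$) together with the complementary factor $R_k := P_\kappa/\Pi_k$, I would replace the root-centered powers $(X-\underline{z}^{(k)})^{j-1}$ appearing in the polynomials $P_k, Q_{k,j}$ by holomorphic-in-$\kappa$ polynomials that reduce to them at $\underline{\kappa}^{(k)}$. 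Since the colliding root stays strictly inside $\D$ on a neighborhood of $\underline{\kappa}^{(k)}$, the quantity $1-|z_i(\kappa)|^2$ is bounded below there, and the same Young-inequality ``add and subtract'' argument as in the proof of Lemma \ref{lem1}, with a single $\eps \in (0,1/4]$ chosen uniformly, keeps the dissipation nonnegative and the energy positive definite throughout $\mathcal{V}_k$, continuously up to the crossing point.

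These local constructions would be glued by a partition of unity $\{\chi_0,\chi_1,\dots,\chi_K\}$ on $(\cercle)^d$, with $\chi_0$ supported away from all crossing points (where the simple-root forms are valid) and $\chi_k$ supported in $\mathcal{V}_k$. The gluing is legitimate because the left-hand side of the balance law and the term $(s+1) \, |P_\kappa({\bf T}) \cdot|^2$ do not depend on the choice of $(q_e,q_d)$: any convex combination $\sum_i \chi_i(\kappa) \, q_e^{(i)}(\kappa)$, $\sum_i \chi_i(\kappa) \, q_d^{(i)}(\kappa)$ again satisfies the pointwise balance law, is positive definite (resp. nonnegative) as a convex combination of such forms, and depends continuously on $\kappa$. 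A compactness argument on $(\cercle)^d$ upgrades pointwise positive definiteness of $q_e(\kappa)$ to a uniform lower bound, yielding coercivity of $E$, while continuity on the compact torus gives the uniform upper bounds, hence continuity of $E$ and $D$. Finally, evaluating the balance law along a solution of \eqref{numcauchy}, where $L u^n = 0$, gives $({\bf T}-I) \, E(u^n,\dots,u^{n+s}) = -D(u^n,\dots,u^{n+s}) \le 0$, so $n \mapsto E(u^n,\dots,u^{n+s})$ is nonincreasing and $\sup_n E(u^n,\dots,u^{n+s}) \le E(f^0,\dots,f^s)$; coercivity and continuity of $E$ then convert this bound into \eqref{estimcauchy}, that is, $(\ell^2$-)stability.
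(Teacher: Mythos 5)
Your proposal follows essentially the same route as the paper: Plancherel reduces the balance law to a pointwise-in-$\kappa$ identity, the simple-root forms of Lemma \ref{lem1} are used away from the crossing points, modified (holomorphic-in-$\kappa$) versions of the multiple-root forms are used near each $\underline{\kappa}^{(k)}$, and a partition of unity plus compactness of $(\cercle)^d$ yields continuity of $E,D$ and coercivity of $E$. The one detail you pass over quickly is that for $\kappa\neq\underline{\kappa}^{(k)}$ the splitting of the branches introduces an extra error in the dissipation form (the mismatch between $({\bf T}-z_k(\kappa))^{m_k-1}$ and $\prod_{j\neq k}({\bf T}-z_j(\kappa))$, whose coefficients vanish holomorphically at the crossing), which the paper absorbs by taking $\varepsilon=1/8$ rather than $1/4$ and shrinking the neighborhood --- a refinement of your argument, not a different idea.
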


\begin{proof}
We use the same notation $v^n$ for the sequence $(v_j^n)_{j \in \Z^d}$ and the corresponding step function on $\R^d$ whose value on the cell 
$[j_1 \, \Delta x_1,(j_1+1) \, \Delta x_1) \times \cdots \times [j_d \, \Delta x_d,(j_d+1) \, \Delta x_d)$ equals $v_j^n$ for any $j \in \Z^d$. Then Plancherel's 
Theorem gives the identity
\begin{multline}
\label{egaliteLG}
2 \, \langle \, M \, v^n,L\, v^n \, \rangle_{-\infty,+\infty} \, - \, (s+1) \, \Ng \, L \, v^n \, \Nd_{-\infty,+\infty}^2 \\
\, = \, \int_{\R^d} 2 \, \text{\rm Re} \Big( \overline{{\bf T} \, (P_\kappa'({\bf T}) \, \widehat{v^n}(\xi))} \, P_\kappa({\bf T}) \, \widehat{v^n} (\xi) \Big) 
\, - \, (s+1) \, \big| P_\kappa({\bf T}) \, \widehat{v^n}(\xi) \big|^2 \, \dfrac{{\rm d}\xi}{(2\, \pi)^d} \, ,
\end{multline}
where $\widehat{v^n}$ denotes the Fourier transform (in $L^2(\R^d)$) of the function $v^n$, and where we have let
\begin{equation*}
P_\kappa (z) \, := \, \sum_{\sigma=0}^{s+1} \, \widehat{Q_\sigma} \big( \kappa_1,\dots,\kappa_d \big) \, z^\sigma \, ,\quad 
\kappa_j \, := \, {\rm e}^{i \, \xi_j \, \Delta x_j} \in \cercle \, ,
\end{equation*}
and $P'_\kappa(z)$ denotes the derivative of $P_\kappa$ with respect to $z$.

The construction of the quadratic forms $E$ and $D$ is made, as in \cite{jfcX}, of the superposition of appropriate energy and dissipation Hermitian forms 
for each frequency $\kappa \in (\cercle)^d$, each coordinate $\kappa_j$ being a placeholder for $\exp (i \, \xi_j \, \Delta x_j)$. Here, unlike \cite{jfcX}, the 
polynomial $P_\kappa$ either only has simple roots in $\Dbar$ or it has one multiple root in $\D$ and all other roots are simple. We cannot therefore 
construct the energy and dissipation forms in a unified manner. Below we shall use the analysis of Lemma \ref{lem1} in the neighborhood of finitely many 
points in $(\cercle)^d$ where $P_\kappa$ has a multiple root and we shall use \cite[Lemma 1]{jfcX} in the neighborhood of all points where $P_\kappa$ 
only has simple roots. (This is the reason why we have recalled the proof of Lemma \ref{lem1} in the case where all roots are simple.) We shall eventually 
glue things together thanks to a suitable partition of unity.

Let us first consider the point $\underline{\kappa}^{(1)} \in (\cercle)^d$ for which $P_{\underline{\kappa}^{(1)}}$ has one multiple root (of multiplicity $m_1$) 
in $\D$ and in the neighborhood of which we have a smooth splitting of the eigenmodes $z_1,\dots,z_{m_1}$. The other roots $z_{m_1+1},\dots,z_{s+1}$ 
are simple and can thus be determined holomorphically with respect to $\kappa$ in the neighborhood of $\underline{\kappa}^{(1)}$. Keeping in mind that 
the dominant coefficient of the polynomial $P_\kappa (z)$ equals $\widehat{Q_{s+1}}(\kappa)$ (which is nonzero for $\kappa \in (\cercle)^d$), we consider 
some $\kappa \in (\cercle)^d$ sufficiently close to $\underline{\kappa}^{(1)}$ and introduce the Lagrange polynomials:
$$
\forall \, k \, = \, 1,\dots,s+1 \, ,\quad P_{k,\kappa}(z) \, := \, \widehat{Q_{s+1}}(\kappa) \, \prod_{\substack{j=1 \\ j \neq k}}^{s+1} \, \big( z \, - \, z_j(\kappa) \big) \, .
$$
We then introduce the following energy and dissipation Hermitian forms on $\C^{s+1}$ (below, $\kappa$ always denotes an element of $(\cercle)^d$ that is 
sufficiently close to $\underline{\kappa}^{(1)}$ so that all considered quantities are well-defined):
\begin{align}
\forall \, (w^0,\dots,w^s) \in \C^{s+1} \, ,\quad & \notag \\
q_{e,\kappa}(w^0,\dots,w^s) \, :=& \, \sum_{k=1}^{s+1} \, |P_{k,\kappa}({\bf T}) \, w^0|^2 \label{defqemult-edp} \\
+ \, \sum_{k=1}^{m_1} \, \sum_{j=1}^{m_1-1} \, \varepsilon^{m_1-j} \, & \, \big( 1 \, - \, |z_k(\kappa)|^2 \big)^{2(m_1-j)} \, 
\left| \widehat{Q_{s+1}}(\kappa) \, \big( {\bf T}-z_k(\kappa) \big)^{j-1} \prod_{\ell=m_1+1}^{s+1} \! \big( {\bf T} \, - \, z_\ell(\kappa) \big) \, w^0 \right|^2 ,\notag \\
q_{d,\kappa}(w^0,\dots,w^s) \, :=& \, \sum_{k=1}^{s+1} \, \big( 1 \, - \, |z_k(\kappa)|^2 \big) \, |P_{k,\kappa}({\bf T}) \, w^0|^2 \label{defqdmult-edp} \\
+ \, \sum_{k=1}^{m_1} \, \sum_{j=1}^{m_1-1} \, \varepsilon^{m_1-j} \, & \, \big( 1 \, - \, |z_k(\kappa)|^2 \big)^{2(m_1-j)} \, \times \notag \\
\Big\{ \Big| \widehat{Q_{s+1}}(\kappa) \, \big( {\bf T}-z_k(\kappa) \big)^{j-1} \! & \prod_{\ell=m_1+1}^{s+1} \! \big( {\bf T} - z_\ell(\kappa) \big) \, w^0 \Big|^2 
-\Big| \widehat{Q_{s+1}}(\kappa) \, \big( {\bf T}-z_k(\kappa) \big)^{j-1} \prod_{\ell=m_1+1}^{s+1} \! \big( {\bf T} - z_\ell(\kappa) \big) \, w^1 \Big|^2 \Big\} \, ,\notag
\end{align}
where $\varepsilon>0$ is a parameter to be fixed later on. Using the decomposition \eqref{enerdissip1} which we have recalled in the proof of 
Lemma \ref{lem1}, we have the decomposition 
\begin{equation}
\label{relationLG}
2 \, \text{\rm Re} \Big( \overline{{\bf T} \, (P_\kappa'({\bf T}) \, w^0)} \, P_\kappa({\bf T}) \, w^0 \Big) \, - \, (s+1) \, |P_\kappa({\bf T}) \, w^0|^2 
\, = \, ({\bf T}-I) \, (q_{e,\kappa}(w^0,\dots,w^s)) \, + \, q_{d,\kappa}(w^0,\dots,w^s) \, ,
\end{equation}
for all vectors $(w^0,\dots,w^s) \in \C^{s+1}$, because we have just added and subtracted some Hermitian forms to the energy-dissipation balance law 
\eqref{enerdissip1}. It remains to prove that $q_{d,\kappa}$ in \eqref{defqdmult-edp} is nonnegative and that $q_{e,\kappa}$ in \eqref{defqemult-edp} is 
positive definite. Let us start with $q_{e,\kappa}$. If $\kappa$ does not equal $\underline{\kappa}^{(1)}$, we know from Assumption \ref{assumption1} 
that the roots $z_1(\kappa),\dots,z_{s+1}(\kappa)$ are pairwise distinct so the Lagrange polynomials $P_{k,\kappa}$ form a basis of $\C_s[X]$. Hence 
$q_{e,\kappa}$ in \eqref{defqemult-edp} is positive definite because we have added a nonnegative form to a positive definite one. We thus now consider 
the case $\kappa=\underline{\kappa}^{(1)}$ for which the $m_1$ first roots $z_1,\dots,z_{m_1}$ all collapse to $\underline{z}^{(1)}$ and the $m_1$ first 
Lagrange polynomials $P_{1,\underline{\kappa}^{(1)}},\dots,P_{m_1,\underline{\kappa}^{(1)}}$ are all equal. At the base point $\kappa = \underline{\kappa}^{(1)}$, 
the definition \eqref{defqemult-edp} thus reduces to:
\begin{multline*}
q_{e,\underline{\kappa}^{(1)}}(w^0,\dots,w^s) \, = \, m_1 \, |P_{1,\underline{\kappa}^{(1)}}({\bf T}) \, w^0|^2 
\, + \, \sum_{k=m_1+1}^{s+1} \, |P_{k,\underline{\kappa}^{(1)}}({\bf T}) \, w^0|^2 \\
\, + \, m_1 \, \sum_{j=1}^{m_1-1} \, \varepsilon^{m_1-j} \, \big( 1 \, - \, |\underline{z}^{(1)}|^2 \big)^{2(m_1-j)} \, \left| \widehat{Q_{s+1}}(\underline{\kappa}^{(1)}) \, 
\big( {\bf T}-\underline{z}^{(1)} \big)^{j-1} \, \prod_{\ell=m_1+1}^{s+1} \, \big( {\bf T} \, - \, z_\ell(\underline{\kappa}^{(1)}) \big) \, w^0 \right|^2 \, ,
\end{multline*}
which (up to the harmless positive constant $m_1$ in the second line) coincides with our definition of the Hermitian form in \eqref{defqemult}. Since the 
polynomials:
\begin{multline*}
P_{1,\underline{\kappa}^{(1)}}(X) \, , \, P_{m_1+1,\underline{\kappa}^{(1)}}(X) \, , \, \dots \, , \, P_{s+1,\underline{\kappa}^{(1)}}(X) \, , \, 
\widehat{Q_{s+1}}(\underline{\kappa}^{(1)}) \, \prod_{\ell=m_1+1}^{s+1} \, \big( X \, - \, z_\ell(\underline{\kappa}^{(1)}) \big) \, , \\
\widehat{Q_{s+1}}(\underline{\kappa}^{(1)}) \, \big( X-\underline{z}^{(1)} \big) \, \prod_{\ell=m_1+1}^{s+1} \, \big( X \, - \, z_\ell(\underline{\kappa}^{(1)}) \big) \, , \, 
\dots \, , \, 
\widehat{Q_{s+1}}(\underline{\kappa}^{(1)}) \, \big( X-\underline{z}^{(1)} \big)^{m_1-2} \, \prod_{\ell=m_1+1}^{s+1} \, \big( X \, - \, z_\ell(\underline{\kappa}^{(1)}) \big) \, ,
\end{multline*}
form a basis of $\C_s[X]$ (this is again the classical Hermite interpolation problem), the form $q_{e,\underline{\kappa}^{(1)}}$ is positive definite as long as 
the parameter $\varepsilon$ is a fixed positive constant (the choice $\varepsilon=1/8$ that is made below will do). Moreover, once $\varepsilon$ is fixed, the 
form $q_{e,\kappa}$ depends in a $\mathcal{C}^\infty$ way on $\kappa$ in the neighborhood of $\underline{\kappa}^{(1)}$.
\bigskip

We now show that the form $q_{d,\kappa}$ in \eqref{defqdmult-edp} is nonnegative for a well-chosen parameter $\varepsilon>0$ and $\kappa \in (\cercle)^d$ 
sufficiently close to $\underline{\kappa}^{(1)}$. The argument is quite similar to what we have done in the proof of Lemma \ref{lem1} but we now need to take 
into account that the $m_1$ first eigenmodes $z_1,\dots,z_{m_1}$ split for $\kappa \neq \underline{\kappa}^{(1)}$, which will make us choose $\varepsilon>0$ 
slightly smaller than in the proof of Lemma \ref{lem1} in order to absorb an additional error. Before going on, let us recall that the eigenmodes $z_1(\kappa), 
\dots, z_{s+1}(\kappa)$ belong to $\Dbar$ for $\kappa \in (\cercle)^d$ close to $\underline{\kappa}^{(1)}$ with $\kappa \neq \underline{\kappa}^{(1)}$. By 
continuity, this implies that they also belong to $\Dbar$ for $\kappa=\underline{\kappa}^{(1)}$. Hereafter, we shall consider $\kappa \in (\cercle)^d$ close 
to $\underline{\kappa}^{(1)}$ and shall therefore feel free to use the inequality $|z_\ell(\kappa)| \le 1$ for all $\ell=1,\dots,s+1$ (the so-called von Neumann 
condition).

Let us consider some vector $(w^0,\dots,w^s) \in \C^{s+1}$ and let us introduce the notation:
\begin{equation}
\label{defWkj}
\forall \, k,j \, = \, 1,\dots,m_1 \, ,\quad W_{k,j} \, := \, \widehat{Q_{s+1}}(\kappa) \, \big( {\bf T}-z_k(\kappa) \big)^{j-1} \, 
\prod_{\ell=m_1+1}^{s+1} \, \big( {\bf T} \, - \, z_\ell(\kappa) \big) \, w^0 \, ,
\end{equation}
where the complex numbers $W_{k,j}$ (which, according to \eqref{defWkj}, are linear combinations of $w^0,\dots,w^s$) also depend on $\kappa$ but 
there is no need to keep track of this in what follows. We start from the definition \eqref{defqdmult-edp} and derive the lower bound:
\begin{multline*}
q_{d,\kappa}(w^0,\dots,w^s) \, \ge \, \sum_{k=1}^{m_1} \, \big( 1 \, - \, |z_k(\kappa)|^2 \big) \, |P_{k,\kappa}({\bf T}) \, w^0|^2 \\
+ \, \sum_{k=1}^{m_1} \, \sum_{j=1}^{m_1-1} \, \varepsilon^{m_1-j} \, \big( 1 \, - \, |z_k(\kappa)|^2 \big)^{2(m_1-j)} \, 
\Big( |W_{k,j}|^2 -|W_{k,j+1} \, + \, z_k(\kappa) \, W_{k,j}|^2 \Big) \, .
\end{multline*}
Expanding the square modulus $|W_{k,j+1} \, + \, z_k(\kappa) \, W_{k,j}|^2$ and using Young's inequality under the form:
\begin{align*}
\left| 2 \, \text{\rm Re} \, \Big( \overline{z_k(\kappa) \, W_{k,j}} \, W_{k,j+1} \Big) \right| \, 
&\le \, \dfrac{1}{2} \, \big( 1 \, - \, |z_k(\kappa)|^2 \big) \, |W_{k,j}|^2 \, + \, \dfrac{2 \, |z_k(\kappa)|^2}{1 \, - \, |z_k(\kappa)|^2} \, |W_{k,j+1}|^2 \\
&\le \, \dfrac{1}{2} \, \big( 1 \, - \, |z_k(\kappa)|^2 \big) \, |W_{k,j}|^2 \, + \, \dfrac{1+|z_k(\kappa)|^2}{1 \, - \, |z_k(\kappa)|^2} \, |W_{k,j+1}|^2 \, ,
\end{align*}
we get:
\begin{multline*}
q_{d,\kappa}(w^0,\dots,w^s) \, \ge \, \sum_{k=1}^{m_1} \, \big( 1 \, - \, |z_k(\kappa)|^2 \big) \, |P_{k,\kappa}({\bf T}) \, w^0|^2 \\
+ \, \sum_{k=1}^{m_1} \, \sum_{j=1}^{m_1-1} \, \varepsilon^{m_1-j} \, \big( 1 \, - \, |z_k(\kappa)|^2 \big)^{2(m_1-j)} \, 
\left( \dfrac{1 \, - \, |z_k(\kappa)|^2}{2} \, |W_{k,j}|^2 -\dfrac{2}{1 \, - \, |z_k(\kappa)|^2} \, |W_{k,j+1}|^2 \right) \, .
\end{multline*}
After shifting indices, we end up with:
\begin{multline*}
q_{d,\kappa}(w^0,\dots,w^s) \, \ge \, \sum_{k=1}^{m_1} \, \big( 1 \, - \, |z_k(\kappa)|^2 \big) \, |P_{k,\kappa}({\bf T}) \, w^0|^2 \\
+ \, \sum_{k=1}^{m_1} \, \sum_{j=1}^{m_1-1} \, \dfrac{\varepsilon^{m_1-j}}{2} \, \big( 1 \, - \, |z_k(\kappa)|^2 \big)^{2(m_1-j)+1} \, |W_{k,j}|^2 
\, - \, \sum_{k=1}^{m_1} \, \sum_{j=2}^{m_1} \, 2 \, \varepsilon \, \varepsilon^{m_1-j} \, \big( 1 \, - \, |z_k(\kappa)|^2 \big)^{2(m_1-j)+1} \, |W_{k,j}|^2 \, .
\end{multline*}
Instead of choosing $\varepsilon \in (0,1/4]$ as in the proof of Lemma \ref{lem1}, we make  the more restrictive choice $\varepsilon \in (0,1/8]$ and 
thus obtain:
\begin{multline}
\label{inegfinale-edp}
q_{d,\kappa}(w^0,\dots,w^s) \, \ge \, \sum_{k=1}^{m_1} \, \big( 1 \, - \, |z_k(\kappa)|^2 \big) \, |P_{k,\kappa}({\bf T}) \, w^0|^2 
- \dfrac{\big( 1 \, - \, |z_k(\kappa)|^2 \big)}{4} \, |W_{k,m_1}|^2 \\
+ \, \sum_{k=1}^{m_1} \, \sum_{j=1}^{m_1-1} \, \dfrac{\varepsilon^{m_1-j}}{4} \, \big( 1 \, - \, |z_k(\kappa)|^2 \big)^{2(m_1-j)+1} \, |W_{k,j}|^2 \, .
\end{multline}
We go back to the definition of the Lagrange polynomial $P_{k,\kappa}$ and of the complex numbers $W_{k,j}$. For $k=1,\dots,m_1$, we have:
$$
P_{k,\kappa}(z) \, = \, \widehat{Q_{s+1}}(\kappa) \, \prod_{\substack{j=1 \\ j \neq k}}^{m_1} \, \big( z \, - \, z_j(\kappa) \big) 
\,  \prod_{\ell=m_1+1}^{s+1} \, \big( z \, - \, z_\ell(\kappa) \big) \, .
$$
The goal is to absorb in \eqref{inegfinale-edp} the only negative term by means of all other positive quantities. To do this, we observe that we 
can expand the polynomial
$$
\big( X \, - \, z_k(\kappa) \big)^{m_1-1} \, ,
$$
on the basis of $\C_{m_1-1}[X]$ formed by the polynomials :
$$
1 \, , \, \big( X \, - \, z_k(\kappa) \big) \, , \, \big( X \, - \, z_k(\kappa) \big)^{m_1-2} \, , \, \prod_{\substack{j=1 \\ j \neq k}}^{m_1} \, \big( X \, - \, z_j(\kappa) \big) \, .
$$
The linear system for determining the coefficients is lower triangular and has determinant $1$ so we can write for each $k=1,\dots,m_1$:
\begin{equation}
\label{decompositionpoly}
\big( X \, - \, z_k(\kappa) \big)^{m_1-1} \, = \, \prod_{\substack{j=1 \\ j \neq k}}^{m_1} \, \big( X \, - \, z_j(\kappa) \big) \, 
+ \sum_{j=1}^{m_1-1} \, a_{k,j}(\kappa) \, \big( X \, - \, z_k(\kappa) \big)^{j-1} \, , 
\end{equation}
with holomorphic functions $a_{k,j}$ defined in the neighborhood of $\underline{\kappa}^{(1)}$ and that vanish at $\underline{\kappa}^{(1)}$. 
The decomposition \eqref{decompositionpoly} gives (just use the definition \eqref{defWkj} and the expression of the Lagrange polynomial $P_{k,\kappa}$):
$$
W_{k,m_1} \, = \, P_{k,\kappa}({\bf T}) \, w^0 \, +\sum_{j=1}^{m_1-1} \, a_{k,j}(\kappa) \, W_{k,j} \, ,
$$
and we now apply the Cauchy-Schwarz inequality twice to get:
$$
|W_{k,m_1}|^2 \, \le \, 2 \, |P_{k,\kappa}({\bf T}) \, w^0|^2 \, +2 \, (m_1-1) \, \sum_{j=1}^{m_1-1} \, |a_{k,j}(\kappa)|^2 \, |W_{k,j}|^2 \, .
$$
Fixing from now on $\varepsilon=1/8$ and using the latter inequality in \eqref{inegfinale-edp}, we find that $q_{d,\kappa}$ is nonnegative for $\kappa$ 
sufficiently close to $\underline{\kappa}^{(1)}$ (recall that $|z_k(\kappa)|<1$ uniformly with respect to $\kappa$ in the neighborhood of $\underline{\kappa}^{(1)}$ 
since the multiple eigenvalue $\underline{z}^{(1)}$ lies in $\D$). Moreover, we observe on the defining equation \eqref{defqdmult-edp} that the Hermitian form 
$q_{d,\kappa}$ depends in a $\mathcal{C}^\infty$ way on $\kappa$ in the neighborhood of $\underline{\kappa}^{(1)}$.
\bigskip

The above analysis close to $\underline{\kappa}^{(1)}$ can be repeated word for word in the neighborhood of any other point $\underline{\kappa}^{(2)}, 
\dots, \underline{\kappa}^{(K)}$ where the dispersion relation \eqref{dispersion} has a multiple root. Now, if $\underline{\kappa} \in (\cercle)^d$ is such 
that the dispersion relation \eqref{dispersion} only has simple roots at $\kappa=\underline{\kappa}$, the analysis is much simpler since we know in that 
case that the roots $z_1,\dots,z_{s+1}$ locally depend holomorphically on $\kappa$ and the energy and dissipation forms can be simply defined as:
\begin{align*}
\forall \, (w^0,\dots,w^s) \in \C^{s+1} \, ,\quad & q_{e,\kappa}(w^0,\dots,w^s) \, := \, \sum_{k=1}^{s+1} \, |P_{k,\kappa}({\bf T}) \, w^0|^2 \, , \\
&q_{d,\kappa}(w^0,\dots,w^s) \, := \, \sum_{k=1}^{s+1} \, \big( 1 \, - \, |z_k(\kappa)|^2 \big) \, |P_{k,\kappa}({\bf T}) \, w^0|^2 \, ,
\end{align*}
with the same notation as above for the Lagrange polynomials $P_{k,\kappa}$. At this stage, we have shown that for any base point $\underline{\kappa}$ 
in the compact manifold $(\cercle)^d$, there exists an open neighborhood $\underline{\mathcal{V}}$ of $\underline{\kappa}$ in $(\cercle)^d$ and there 
exists a $\mathcal{C}^\infty$ mapping $q_{e,\kappa}$, resp. $q_{d,\kappa}$, on $\underline{\mathcal{V}}$ with values in the set of positive definite, resp. 
nonnegative, Hermitian forms, such that the decomposition \eqref{relationLG} holds for all $\kappa \in \underline{\mathcal{V}}$ and all vectors $(w^0,\dots,w^s) 
\in \C^{s+1}$. By compactness of $(\cercle)^d$, we can take a finite covering of $(\cercle)^d$ by such neighborhoods and glue the local definitions of the 
energy and dissipation forms thanks to a subordinate partition of unity. We have thus constructed a positive definite, resp. nonnegative, Hermitian form 
$q_{e,\kappa}$, resp. $q_{d,\kappa}$, on $\C^{s+1}$ which depends in a $\mathcal{C}^\infty$ way on $\kappa \in (\cercle)^d$ and such that there holds:
\begin{multline*}
2 \, \langle \, M \, v^n,L \, v^n \, \rangle_{-\infty,+\infty} \, - \, (s+1) \, \Ng \, L\, v^n \, \Nd_{-\infty,+\infty}^2 \\
= \,  ({\bf T}-I) \, \int_{\R^d} \, q_{e,\kappa} \big( \widehat{v^n}(\xi),\dots,\widehat{v^{n+s}}(\xi) \big) \, \dfrac{{\rm d}\xi}{(2\, \pi)^d} \, + \, 
 \int_{\R^d} \, q_{d,\kappa} \big( \widehat{v^n}(\xi),\dots,\widehat{v^{n+s}}(\xi) \big) \, \dfrac{{\rm d}\xi}{(2\, \pi)^d} \, ,
\end{multline*}
where we recall that $\kappa$ is a placeholder for $(\exp (i \, \xi_1 \, \Delta x_1),\dots,\exp (i \, \xi_d \, \Delta x_d))$. The conclusion of Proposition \ref{prop2} 
follows as in \cite{jfcX} by a standard compactness argument for showing continuity of the quadratic forms $E$ and $D$, and coercivity for $E$.
\end{proof}

The $\mathcal{C}^\infty$ regularity of the Hermitian forms $q_{e,\kappa},q_{d,\kappa}$ with respect to $\kappa$ is not needed in the proof of Proposition 
\ref{prop2} (continuity with respect to $\kappa$ would be enough) but we have paid attention to that particular issue since it is a crucial step for later extending 
this construction to variable coefficients problems and applying symbolic calculus rules as in \cite{laxnirenberg}. This is left to a future work.

\section{Semigroup estimates for fully discrete initial boundary value problems}
\label{section3}

It remains to prove Theorem \ref{mainthm} with the help of Proposition \ref{prop2}. The strategy is exactly the same as in \cite{jfcX} since the analysis in that 
earlier work shows that the cornerstone of the proof of Theorem \ref{mainthm} is the existence of a multiplier for the fully discrete Cauchy problem on $\Z^d$. 
Let us emphasize that the relation \eqref{egaliteLG} is of the exact same form as in \cite{jfcX}. The multiplier $M\, v^n$ is the same. The only difference is in the 
definition of the energy and dissipation forms $E$ and $D$, but their precise expression is not useful in what follows. What matters is that $D$ is nonnegative, 
and $E$ is coercive and therefore yields a control of $\ell^2$ norms on $\Z^d$. Hence we can apply the same arguments as in \cite{jfcX} as long as the proof 
of Theorem \ref{mainthm} only uses the result of Proposition \ref{prop2} and not the behavior of the roots of the dispersion 
relation \eqref{dispersion}. We thus follow the proof of \cite[Theorem 1]{jfcX} and explain where the same arguments can be applied without any modification.

\subsection{The case with zero initial data}

The first step in \cite{jfcX} is to prove the validity of \eqref{estim1d} for zero initial data ($f^0=\cdots=f^s=0$ in \eqref{numibvp}). This part of the proof only uses 
the relation \eqref{egaliteLG} and the fact that the multiplier $M$ has the same stencil as the original difference operator $L$. Hence we can repeat the arguments 
in \cite{jfcX} word for word and obtain the validity of \eqref{estim1d} when the iteration \eqref{numibvp} is considered with zero initial data. It then remains to 
consider \eqref{numibvp} with nonzero initial data in $\ell^2$ and zero interior/boundary forcing terms.

\subsection{Construction of dissipative boundary conditions}

This was the most technical step of the analysis in \cite{jfcX}. The goal here is to construct an auxiliary set of numerical boundary conditions for which, with 
arbitrary initial data in $\ell^2$, we can derive an optimal semigroup estimate and a trace estimate for the solution. Our result here is the same as in \cite{jfcX} 
but it now holds in the broader framework of Assumption \ref{assumption1}. (Theorem \ref{absorbing} is the place where Assumption \ref{assumption2} is 
needed.)

\begin{theorem}
\label{absorbing}
Let Assumptions \ref{assumption0}, \ref{assumption1} and \ref{assumption2} be satisfied. Then for all $P_1 \in \N$, there exists a constant $C_{P_1}>0$ such 
that, for all initial data $f^0,\dots,f^s \in \ell^2 (\Z^d)$ and for all source term $(g_j^n)_{j_1 \le 0,j' \in \Z^{d-1},n \ge s+1}$ that satisfies the integrability condition:
\begin{equation*}
\forall \, \Gamma \, > \, 0 \, ,\quad \sum_{n\ge s+1} \, {\rm e}^{-2\, \Gamma \, n} \, \sum_{j_1 \le 0} \, \| \, g_{j_1,\cdot}^n \, \|_{\ell^2(\Z^{d-1})}^2 \, < \, + \, \infty \, ,
\end{equation*}
there exists a unique sequence $(u_j^n)_{j \in \Z^d,n\in \N}$ in $\ell^2 (\Z^d)^\N$ solution to the iteration
\begin{equation}
\label{numabsorbing}
\begin{cases}
L \, u_j^n \, = \, 0 \, ,& j \in \Z^d \, ,\quad j_1 \ge 1\, ,\quad n \ge 0 \, ,\\
M \, u_j^n \, = \, g_j^{n+s+1} \, ,& j \in \Z^d \, ,\quad j_1 \le 0\, ,\quad n \ge 0 \, ,\\
u_j^n \, = \, f_j^n \, ,& j \in \Z^d \, ,\quad n \, = \, 0,\dots,s \, .
\end{cases}
\end{equation}
Moreover for all $\gamma>0$ and $\Delta t \in (0,1]$, this solution satisfies
\begin{multline}
\label{estimabsorb}
\sup_{n \ge 0} \, {\rm e}^{-2\, \gamma \, n\, \Delta t} \, \Ng \, u^n \, \Nd_{-\infty,+\infty}^2 \, + \, \dfrac{\gamma}{\gamma \, \Delta t+1} \, 
\sum_{n\ge 0} \, \Delta t \, {\rm e}^{-2\, \gamma \, n\, \Delta t} \, \Ng \, u^n \, \Nd_{-\infty,+\infty}^2 \\
+ \, \sum_{n\ge 0} \, \Delta t \, {\rm e}^{-2\, \gamma \, n\, \Delta t} \, \sum_{j_1=1-r_1}^{P_1} \, \| \, u_{j_1,\cdot}^n \, \|_{\ell^2(\Z^{d-1})}^2 \\
\le \, C_{P_1} \, \left\{ \, \sum_{\sigma=0}^s \, \Ng \, f^\sigma \, \Nd_{-\infty,+\infty}^2 \, + \, 
\sum_{n\ge s+1} \, \Delta t \, {\rm e}^{-2\, \gamma \, n\, \Delta t} \, \sum_{j_1 \le 0} \, \| \, g_{j_1,\cdot}^n \, \|_{\ell^2(\Z^{d-1})}^2 \right\} \, .
\end{multline}
\end{theorem}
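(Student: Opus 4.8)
The plan is to follow the proof of the corresponding result in \cite{jfcX} as closely as possible, the guiding principle being that the energy-dissipation balance of Proposition \ref{prop2} has here exactly the same form as in \cite{jfcX}: only the expressions of $E$ and $D$ have changed, and only the facts that $E$ is coercive and continuous and that $D$ is nonnegative will be used. In particular the behaviour of the roots of \eqref{dispersion}, which is where Assumption \ref{assumption1} and the crossing modes enter, has already been entirely absorbed into Proposition \ref{prop2} and never reappears below. I would split the argument into three stages: well-posedness of \eqref{numabsorbing} in $\ell^2(\Z^d)^\N$; the weighted energy estimate giving the supremum and interior time-integrated bounds in \eqref{estimabsorb}; and the trace estimate for the finite boundary band $j_1=1-r_1,\dots,P_1$.

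The energy estimate is the transparent stage, and it is exactly where the design of \eqref{numabsorbing} around the multiplier $M$ pays off. Applying Proposition \ref{prop2} to $v^n:=u^n$ and splitting the full-space scalar product $\langle M\,u^n,L\,u^n\rangle_{-\infty,+\infty}$ into the half-spaces $j_1\ge 1$ and $j_1\le 0$, the interior equation $L\,u^n=0$ for $j_1\ge 1$ annihilates the first contribution, while the boundary equation $M\,u^n=g^{n+s+1}$ for $j_1\le 0$ turns the second one into $\langle g^{n+s+1},L\,u^n\rangle_{-\infty,0}$; for the same reason $\Ng L\,u^n\Nd_{-\infty,+\infty}^2=\Ng L\,u^n\Nd_{-\infty,0}^2$. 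The balance law thus collapses to
\[
(s+1)\,\Ng L\,u^n\Nd_{-\infty,0}^2 + ({\bf T}-I)\,E(u^n,\dots,u^{n+s}) + D(u^n,\dots,u^{n+s}) = 2\,\langle g^{n+s+1},L\,u^n\rangle_{-\infty,0}\,.
\]
A single Young inequality absorbs the right-hand side into the first term on the left, and since $D\ge 0$ this leaves the telescoping inequality $({\bf T}-I)\,E(u^n,\dots,u^{n+s})\le C\,\Ng g^{n+s+1}\Nd_{-\infty,0}^2$, whose right-hand side is in turn controlled, up to a fixed CFL-dependent constant, by the boundary term $\sum_{j_1\le 0}\|g_{j_1,\cdot}^{n+s+1}\|_{\ell^2(\Z^{d-1})}^2$ appearing in \eqref{estimabsorb}. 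Inserting the weight ${\rm e}^{-2\gamma n\Delta t}$ and performing the Abel summation verbatim as in \cite{jfcX}, the coercivity and continuity of $E$ then yield both $\sup_n {\rm e}^{-2\gamma n\Delta t}\Ng u^n\Nd_{-\infty,+\infty}^2$ and the time-integrated bound carrying the factor $\gamma/(\gamma\Delta t+1)$, in terms of $\sum_\sigma\Ng f^\sigma\Nd_{-\infty,+\infty}^2$ and the weighted sum of the boundary data.

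The two genuinely technical stages, inherited from \cite{jfcX}, are well-posedness and the trace estimate, and this is where Assumption \ref{assumption2} is indispensable. For both I would take the Laplace transform in time (dual variable $z\in\U$) together with the Fourier transform in the tangential variable $j'\in\Z^{d-1}$, reducing \eqref{numabsorbing} to a family, parametrized by $z$ and $\eta\in\R^{d-1}$, of scalar difference equations in $j_1\in\Z$. Assumption \ref{assumption2} guarantees that the extremal symbols $a_{-r_1}(z,\eta)$ and $a_{p_1}(z,\eta)$ are invertible on $\Ubar\times\R^{d-1}$ and of nonzero degree in $z$, so that this recurrence admits a well-defined stable/unstable splitting; selecting the $\ell^2$-decaying modes and imposing the condition $M\,u=g$ gives precisely the right number of conditions, hence a unique solution and a resolvent bound uniform up to $|z|=1$. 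Transporting this bound back by Plancherel in $(z,\eta)$ produces the band trace estimate with a constant $C_{P_1}$ independent of $\Delta t$; this cannot be read off from the interior $\ell^2$ control alone, since extracting a fixed band from $\Ng u^n\Nd_{-\infty,+\infty}^2$ would cost a factor $1/\Delta x_1$.

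The main obstacle is precisely this well-posedness/resolvent step: the energy identity is exact but purely local in time and says nothing about existence, so one must establish the uniform resolvent bound as $z$ approaches $\cercle$ and check that the boundary condition built from the multiplier $M$ is suitably dissipative there. The reassuring point, and the reason the whole argument goes through essentially word for word as in \cite{jfcX}, is that the crossing of eigenmodes allowed by Assumption \ref{assumption1} concerns the interior Cauchy dynamics and has already been handled inside Proposition \ref{prop2}; it does not interfere with the boundary resolvent analysis, which depends only on Assumption \ref{assumption2}. Gluing the resolvent construction of the first and third stages with the energy estimate of the second then reproduces \eqref{estimabsorb} exactly as in \cite{jfcX}.
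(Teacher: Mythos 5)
Your second stage (the energy estimate) and your observation that Proposition \ref{prop2} is used as a black box there are in line with the paper, and well-posedness is actually simpler than your Laplace-transform construction: the paper gets existence and uniqueness directly from the invertibility of $Q_{s+1}$ on $\ell^2(\Z^d)$, since at each time step the equations $L\,u^n=0$ ($j_1\ge 1$) and $M\,u^n=g^{n+s+1}$ ($j_1\le 0$) together prescribe $Q_{s+1}\,u^{n+s+1}$ on all of $\Z^d$. The genuine gap is in your treatment of the trace estimate, and more precisely in your closing claim that the boundary/resolvent analysis ``depends only on Assumption \ref{assumption2}'' and that Assumption \ref{assumption1} ``has already been handled inside Proposition \ref{prop2}.'' This is exactly backwards with respect to the crux of the paper. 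The trace estimate is reduced (via Laplace--Fourier transform and Lemma \ref{lem3'}) to a uniform spatial estimate, Lemma \ref{lem3}, proved by a compactness/contradiction argument involving the two companion matrices $\LL(z,\eta)$ and $\M(z,\eta)$ built from the symbols $a_{\ell_1}$ and $z\,\partial_z a_{\ell_1}$. The mechanism that makes the argument close is that $\M(z,\eta)$ has \emph{no eigenvalue on $\cercle$} for any $(z,\eta)\in\Ubar\times\R^{d-1}$, so that $\underline{W}_0$ lies in the unstable generalized eigenspace of $\M$, hence in the part of the unstable eigenspace of $\LL$ common to both matrices, and is then killed by integrating the $\LL$-recursion from $+\infty$. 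Proving that $\M$ has no central eigenvalue amounts to showing that the derived dispersion relation $\sum_{\sigma=1}^{s+1}\sigma\,\widehat{Q_\sigma}(\kappa)\,z^{\sigma-1}=0$ has no root in $\Ubar$, and this is obtained from the Gauss--Lucas theorem \emph{together with} the part of Assumption \ref{assumption1} asserting that any root of \eqref{dispersion} lying on $\cercle$ is simple (a double root on $\cercle$ would produce a root of the derivative on $\cercle$). So Assumption \ref{assumption1} enters the boundary analysis in an essential way; indeed, verifying that its relaxed form still yields this property is precisely why Theorem \ref{absorbing} has to be re-proved here rather than quoted from \cite{jfcX}.

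Beyond this misattribution, your sketch of the third stage does not actually supply the hard step you yourself identify: the uniformity of the resolvent bound as $z\to\cercle$, where $\LL(z,\eta)$ may acquire central eigenvalues and the naive stable/unstable splitting of the $j_1$-recurrence degenerates. ``Selecting the $\ell^2$-decaying modes and imposing $M\,u=g$'' does not resolve this degeneracy; the paper's resolution is exactly the interplay between the two recursions \eqref{induction1} and \eqref{induction2} and the absence of central spectrum for $\M$ described above. Without that ingredient the trace estimate, i.e.\ the third term on the left of \eqref{estimabsorb}, is not established.
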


\begin{proof}
Unsurprisingly, most of the proof of Theorem \ref{absorbing} is the same as in \cite{jfcX} but there is one specific point where the behavior of the roots 
to the dispersion relation \eqref{dispersion} is used so we review the main steps of the proof and simply refer to \cite{jfcX} when no modification is needed. 
First, the existence and uniqueness of a solution to \eqref{numabsorbing} follows from the invertibility of $Q_{s+1}$ on $\ell^2(\Z^d)$.  Then, using 
Proposition \ref{prop1}, we can derive the same estimate as in \cite{jfcX} for the solution to \eqref{numabsorbing}:
\begin{multline}
\label{estim1}
\sup_{n \ge 0} \, {\rm e}^{-2\, \gamma \, n\, \Delta t} \, \Ng \, u^n \, \Nd_{-\infty,+\infty}^2 \, + \, 
\dfrac{\gamma}{\gamma \, \Delta t +1} \, \sum_{n\ge 0} \, \Delta t \, {\rm e}^{-2\, \gamma \, n\, \Delta t} \, \Ng \, u^n \, \Nd_{-\infty,+\infty}^2 \\
+ \, \sum_{n\ge 0} \, \Delta t \, {\rm e}^{-2\, \gamma \, (n+s+1)\, \Delta t} \, \sum_{j_1 \in \Z} \, \| \, L \, u_{j_1,\cdot}^n \, \|_{\ell^2(\Z^{d-1})}^2 \\
\le \, C \, \left\{ \sum_{\sigma=0}^s \, \Ng \, f^\sigma \, \Nd_{-\infty,+\infty}^2 \, + \, 
\sum_{n\ge s+1} \, \Delta t \, {\rm e}^{-2\, \gamma \, n\, \Delta t} \, \sum_{j_1 \le 0} \, \| \, g_{j_1,\cdot}^n \, \|_{\ell^2(\Z^{d-1})}^2 \right\} \, ,
\end{multline}
where the constant $C$ is independent of $\gamma$, $\Delta t$ and on the source terms in \eqref{numabsorbing}. It remains to derive the trace estimate 
for the solution $(u_j^n)$ to \eqref{numabsorbing} (that is showing that the third term in the sum on the left hand side of the inequality \eqref{estimabsorb} 
is controlled by the right hand side).
\bigskip

The derivation of the trace estimate when $\gamma \, \Delta t$ is large enough is done as in \cite{jfcX} since it only uses the invertibility of the operator 
$Q_{s+1}$ on $\ell^2(\Z^d)$. We can thus assume from now on $\gamma \, \Delta t \in (0,\ln R_0]$ for some fixed constant $R_0>1$. Then we can 
deduce from \eqref{estim1} that for any $j_1 \in \Z$, the Laplace-Fourier transform $\widehat{u_{j_1}}$ of the step function
\begin{equation*}
u_{j_1} \quad : \quad (t,y) \in \R^+ \times \R^{d-1} \longmapsto u_j^n \quad \text{\rm if } (t,y) \in [n\, \Delta t,(n+1) \, \Delta t) 
\times \prod_{k=2}^d \, [j_k \, \Delta x_k,(j_k+1) \, \Delta x_k) \, ,
\end{equation*}
is well-defined on the half-space $\{ \tau \in \C \, , \, \text{\rm Re } \tau >0 \} \times \R^{d-1}$. The dual variables to $(t,y)$ are denoted $\tau = \gamma 
+ i \, \theta$, $\gamma>0$, and $\eta =(\eta_2,\dots,\eta_d) \in \R^{d-1}$. We also use below the notation $\eta_\Delta := (\eta_2 \, \Delta x_2, \dots, 
\eta_d \, \Delta x_d)$. The following result, which is proved in \cite{jfcX}, is used here as a blackbox since its proof is merely based on the validity of 
\eqref{estim1} and Plancherel's Theorem.

\begin{lemma}
\label{lem3'}
With $R_0>1$ fixed as above, there exists a constant $C>0$ such that for all $\gamma>0$ and $\Delta t \in (0,1]$ satisfying $\gamma \, \Delta t \in (0,\ln R_0]$, 
there holds
\begin{multline}
\label{estimlem3'}
\sum_{j_1 \in \Z} \, \int_{\R \times \R^{d-1}} \, \left| \, \sum_{\ell_1=-r_1}^{p_1} \, a_{\ell_1} \big( {\rm e}^{(\gamma +i\, \theta) \, \Delta t},\eta_\Delta \big) \, 
\widehat{u_{j_1+\ell_1}}(\gamma +i\, \theta,\eta) \, \right|^2 \, {\rm d}\theta \, {\rm d}\eta \\
+ \, \sum_{j_1 \le 0} \, \int_{\R \times \R^{d-1}} \, \left| \, \sum_{\ell_1=-r_1}^{p_1} \, {\rm e}^{(\gamma +i\, \theta) \, \Delta t} \, 
\partial_z a_{\ell_1} \big( {\rm e}^{(\gamma +i\, \theta) \, \Delta t},\eta_\Delta \big) \, \widehat{u_{j_1+\ell_1}}(\gamma +i\, \theta,\eta) \, \right|^2 \, {\rm d}\theta \, {\rm d}\eta \\
\le \, C \, \left\{ \, \sum_{\sigma=0}^s \, \Ng \, f^\sigma \, \Nd_{-\infty,+\infty}^2 \, + \, 
\sum_{n\ge s+1} \, \Delta t \, {\rm e}^{-2\, \gamma \, n\, \Delta t} \, \sum_{j_1 \le 0} \, \| \, g_{j_1,\cdot}^n \, \|_{\ell^2(\Z^{d-1})}^2 \, \right\} \, .
\end{multline}
Recall that the functions $a_{\ell_1}$, $\ell_1=-r_1,\dots,p_1$, are defined in \eqref{defA-d}.
\end{lemma}

\noindent The conclusion now relies on the following crucial result. (This is the place where the behavior of the roots to the dispersion relation \eqref{dispersion} 
matters, and where we therefore need to be careful.)

\begin{lemma}[The trace estimate]
\label{lem3}
Let Assumptions \ref{assumption0}, \ref{assumption1} and \ref{assumption2} be satisfied. Let $R_0>1$ be fixed as above and let $P_1 \in \N$. Then there exists a 
constant $C_{P_1}>0$ such that for all $z \in \U$ with $|z| \le R_0$, for all $\eta \in \R^{d-1}$ and for all sequence $(w_{j_1})_{j_1 \in \Z} \in \ell^2(\Z;\C)$, there holds
\begin{equation}
\label{estimlem3}
\sum_{j_1=-r_1-p_1}^{P_1} \, |w_{j_1}|^2 \, \le \, C_{P_1} \, \left\{ \, \sum_{j_1 \in \Z} \, \left| \, \sum_{\ell_1=-r_1}^{p_1} a_{\ell_1}(z,\eta_\Delta) \, w_{j_1+\ell_1} \, \right|^2 
\, + \, \sum_{j_1 \le 0} \, \left| \, \sum_{\ell_1=-r_1}^{p_1} \, z \, \partial_z a_{\ell_1}(z,\eta_\Delta) \, w_{j_1+\ell_1} \, \right|^2 \, \right\} \, .
\end{equation}
\end{lemma}

\noindent As in \cite{jfcX}, Lemma \ref{lem3} yields the conclusion of Theorem \ref{absorbing} by integrating \eqref{estimlem3} for the sequence 
$(\widehat{u_{j_1}}(\gamma +i\, \theta,\eta))_{j_1 \in \Z}$ with respect to $(\theta,\eta)$ (taking $z={\rm e}^{(\gamma +i\, \theta) \, \Delta t}$ accordingly), using 
the inequality \eqref{estimlem3'} from Lemma \ref{lem3'} and applying Plancherel's Theorem. We thus focus on the proof of Lemma \ref{lem3} from now on.
\end{proof}

\begin{proof}[Proof of Lemma \ref{lem3}]
We reproduce most of the proof that can already be found in \cite{jfcX} in order to highlight where Assumption \ref{assumption1} (in its new form) is used. We argue 
by contradiction and assume that the conclusion to Lemma \ref{lem3} does not hold. Therefore, up to normalizing and extracting subsequences, there exist three 
sequences (indexed by $k \in \N$):
\begin{itemize}
 \item a sequence $(w^k)_{k \in \N}$ with values in $\ell^2(\Z;\C)$ such that $(w_{-r_1-p_1}^k,\dots,w_{P_1}^k)$ belongs to the unit sphere of $\C^{P_1+r_1+p_1+1}$ 
 for all $k$, and $(w_{-r_1-p_1}^k,\dots,w_{P_1}^k)$ converges towards $(\underline{w}_{-r_1-p_1},\dots,\underline{w}_{P_1})$ as $k$ tends to infinity,
 
 \item a sequence $(z^k)_{k \in \N}$ with values in $\U \cap \{ \zeta \in \C \, , \, |\zeta| \le R_0 \}$, which converges towards $\underline{z} \in \Ubar$,
 
 \item a sequence $(\eta^k)_{k \in \N}$ with values in $[0,2\, \pi]^{d-1}$, which converges towards $\underline{\eta} \in [0,2\, \pi]^{d-1}$,
\end{itemize}
and these sequences satisfy:
\begin{equation}
\label{lem3-1}
\lim_{k \rightarrow +\infty} \quad \sum_{j_1 \in \Z} \, \left| \, \sum_{\ell_1=-r_1}^{p_1} \, a_{\ell_1}(z^k,\eta^k) \, w^k_{j_1+\ell_1} \, \right|^2 \, + \, 
\sum_{j_1 \le 0} \, \left| \, \sum_{\ell_1=-r_1}^{p_1} \, z^k \, \partial_z a_{\ell_1}(z^k,\eta^k) \, w^k_{j_1+\ell_1} \, \right|^2 \, = \, 0 \, .
\end{equation}
We are going to show that \eqref{lem3-1} implies that the vector $(\underline{w}_{-r_1-p_1},\dots,\underline{w}_{P_1})$ must be zero, which will yield a contradiction 
since this vector has norm $1$.
\bigskip

$\bullet$ We already know that $(w_{-r_1-p_1}^k,\dots,w_{P_1}^k)$ converges towards $(\underline{w}_{-r_1-p_1},\dots,\underline{w}_{P_1})$ as $k$ tends to infinity, 
and arguing by induction as in \cite{jfcX}, we can show that \eqref{lem3-1} and Assumption \ref{assumption2} imply that each component $(w^k_{j_1})_{k \in \N}$, 
$j_1 \in \Z$, has a limit as $k$ tends to infinity. This limit is denoted $\underline{w}_{j_1}$ for any $j_1 \in \Z$. Then \eqref{lem3-1} implies that the sequence 
$\underline{w}$, which does not necessarily belong to $\ell^2(\Z;\C)$, satisfies the two recurrence relations (observe that the recurrence relation \eqref{induction2} 
only holds on $(-\infty,0)$ and not on $\Z$):
\begin{align}
\forall \, j_1 \in \Z \, ,\quad & \sum_{\ell_1=-r_1}^{p_1} \, a_{\ell_1}(\underline{z},\underline{\eta}) \, \underline{w}_{j_1+\ell_1} \, = \, 0 \, ,\label{induction1} \\
\forall \, j_1 \le 0 \, ,\quad & \sum_{\ell_1=-r_1}^{p_1} \, \underline{z} \, \partial_z a_{\ell_1}(\underline{z},\underline{\eta}) \, \underline{w}_{j_1+\ell_1} \, = \, 0 
\, .\label{induction2}
\end{align}
\bigskip

$\bullet$ We define the source terms:
\begin{equation*}
\forall \, j_1 \in \Z \, ,\quad F_{j_1}^k \, := \, \sum_{\ell_1=-r_1}^{p_1} \, a_{\ell_1}(z^k,\eta^k) \, w^k_{j_1+\ell_1} \, ,\quad 
G_{j_1}^k \, := \, \sum_{\ell_1=-r_1}^{p_1} \, z^k \, \partial_z a_{\ell_1}(z^k,\eta^k) \, w^k_{j_1+\ell_1} \, ,
\end{equation*}
which, according to \eqref{lem3-1}, satisfy
\begin{equation}
\label{lem3-2}
\lim_{k \rightarrow 0} \quad \sum_{j_1 \in \Z} \, | \, F_{j_1}^k \, |^2 \, = \, 0 \, ,\quad 
\lim_{k \rightarrow 0} \quad \sum_{j_1 \le 0} \, | \, G_{j_1}^k \, |^2 \, = \, 0 \, .
\end{equation}
We also introduce the vectors (here $T$ denotes transposition)
\begin{equation*}
\forall \, j_1 \in \Z \, ,\quad W_{j_1}^k \, := \, \Big( \, w^k_{j_1+p_1},\dots,w^k_{j_1+1-r_1} \, \Big)^T \, ,\quad 
\underline{W}_{j_1} \, := \, \Big( \, \underline{w}_{j_1+p_1},\dots,\underline{w}_{j_1+1-r_1} \, \Big)^T \, ,
\end{equation*}
and the matrices:
\begin{align}
\LL (z,\eta) & \, := \, \begin{pmatrix}
-a_{p_1-1} (z,\eta)/a_{p_1} (z,\eta) & \dots & \dots & -a_{-r_1} (z,\eta)/a_{p_1} (z,\eta) \\
1 & 0 & \dots & 0 \\
0 & \ddots & \ddots & \vdots \\
0 & 0 & 1 & 0 \end{pmatrix} \in {\mathcal M}_{p_1+r_1}(\C) \, ,\label{defL} \\
\M (z,\eta) & \, := \, \begin{pmatrix}
-\partial_z a_{p_1-1} (z,\eta)/\partial_z a_{p_1} (z,\eta) & \dots & \dots & 
-\partial_z a_{-r_1} (z,\eta)/\partial_z a_{p_1} (z,\eta) \\
1 & 0 & \dots & 0 \\
0 & \ddots & \ddots & \vdots \\
0 & 0 & 1 & 0 \end{pmatrix} \in {\mathcal M}_{p_1+r_1}(\C) \, .\label{defM}
\end{align}
The matrix $\LL$ is well-defined on $\Ubar \times \R^{d-1}$ thanks to Assumption \ref{assumption2}. The matrix $\M$ is also well-defined on $\Ubar \times \R^{d-1}$ 
because for any $\eta \in \R^{d-1}$, Assumption \ref{assumption2} asserts that $a_{p_1}(\cdot,\eta)$ is a nonconstant polynomial whose roots lie in $\D$. From the 
Gauss-Lucas Theorem, the roots of $\partial_z a_{p_1}(\cdot,\eta)$ lie in the convex hull of those of $a_{p_1}(\cdot,\eta)$, hence in $\D$. Therefore $\partial_z 
a_{p_1}(\cdot,\eta)$ does not vanish on $\Ubar$. In the same way, $\partial_z a_{-r_1}(\cdot,\eta)$ does not vanish on $\Ubar$.

With our above notation, the vectors $W_{j_1}^k$, $\underline{W}_{j_1}$, satisfy the one step recurrence relations:
\begin{align}
\forall \, j_1 \in \Z \, ,\quad W_{j_1+1}^k & \, = \, \LL(z^k,\eta^k) \, W_{j_1}^k \, + \, \Big( \, F^k_{j_1+1}/a_{p_1} (z^k,\eta^k),0,\dots,0 \, \Big)^T \, ,\label{induction1'} \\
\underline{W}_{j_1+1} & \, = \, \LL(\underline{z},\underline{\eta}) \, \underline{W}_{j_1} \, , \label{induction1''} \\
\forall \, j_1 \le -1 \, ,\quad W_{j_1+1}^k & \, = \, \M(z^k,\eta^k) \, W_{j_1}^k \, + \, \Big( \, G^k_{j_1+1}/(z^k \, \partial_z a_{p_1} (z^k,\eta^k)),0,\dots,0 \, \Big)^T 
\, ,\label{induction2'} \\
\underline{W}_{j_1+1} & \, = \, \M(\underline{z},\underline{\eta}) \, \underline{W}_{j_1} \, .\label{induction2''}
\end{align}
The recurrence relations \eqref{induction1''}, \eqref{induction2''} are just an equivalent way of writing \eqref{induction1}, \eqref{induction2}.
\bigskip

$\bullet$ From Assumption \ref{assumption2} and the above application of the Gauss-Lucas Theorem, we already know that both matrices $\LL(z,\eta)$ and $\M(z,\eta)$ 
are invertible for $(z,\eta) \in \Ubar \times \R^{d-1}$. Furthermore, a quick analysis shows that $\kappa \in \C \setminus \{ 0 \}$ is an eigenvalue of $\LL(z,\eta)$ if and only 
if $z$ is a solution to the dispersion relation \eqref{dispersion}. Assumption \ref{assumption1} therefore shows that $\LL(z,\eta)$ has no eigenvalue on $\cercle$ for $(z,\eta) 
\in \U \times \R^{d-1}$ for otherwise the von Neumann condition would not hold. (This eigenvalue splitting property dates back at least to \cite{kreiss1}.) However, central 
eigenvalues on $\cercle$ may occur for $\LL$ when $z$ belongs to $\cercle$ (see \cite{jfcnotes} for a thorough analysis of the leap-frog scheme).

As in \cite{jfcX}, the crucial point for proving Lemma \ref{lem3} is that Assumption \ref{assumption1} in its new form still precludes central eigenvalues of $\M$ for all 
$z \in \Ubar$. Namely, let us show that for all $z \in \Ubar$ and all $\eta \in \R^{d-1}$, $\M(z,\eta)$ has no eigenvalue on $\cercle$. This property holds because otherwise, 
for some $(z,\eta) \in \Ubar \times \R^{d-1}$, there would exist a root $\kappa_1 \in \cercle$ to the characteristic polynomial of $\M(z,\eta)$, that is (up to multiplying by a 
nonzero factor):
\begin{equation*}
\sum_{\ell_1=-r_1}^{p_1} \, z \, \partial_z a_{\ell_1}(z,\eta) \, \kappa_1^{\ell_1} \, = \, 0 \, .
\end{equation*}
For convenience, the coordinates of $\eta$ are denoted $(\eta_2,\dots,\eta_d)$. Using the definition \eqref{defA-d} of $a_{\ell_1}$, and defining $\kappa := 
(\kappa_1,{\rm e}^{i\, \eta_2},\dots,{\rm e}^{i\, \eta_d}) \in (\cercle)^d$, we have found a root $z \in \Ubar$ to the relation
\begin{equation}
\label{dispersionder}
\sum_{\sigma=1}^{s+1} \sigma \, \widehat{Q_\sigma} (\kappa) \, z^{\sigma-1} \, = \, 0 \, .
\end{equation}
This is where the new form of Assumption \ref{assumption1} matters. Namely, we know that for all $\kappa \in (\cercle)^d$, the roots of the polynomial equation 
\eqref{dispersion} lie in $\Dbar$ and if there are roots on the boundary $\cercle$, then they must necessarily be simple. Applying again the Gauss-Lucas Theorem, 
we know that the roots to \eqref{dispersionder} lie in the convex hull of those to \eqref{dispersion} and therefore belong to $\D$ (because the only possibility for 
\eqref{dispersionder} to have a root on the boundary $\cercle$ would be that \eqref{dispersion} admits a double root on $\cercle$ but this degeneracy is precluded 
by Assumption \ref{assumption1}). The Gauss-Lucas Theorem thus shows that the roots to the relation \eqref{dispersionder} do not belong to $\Ubar$. Hence $\M 
(z,\eta)$ has no eigenvalue on $\cercle$ for any $(z,\eta) \in \Ubar \times \R^{d-1}$.
\bigskip

$\bullet$ At this stage, we know that for $(z,\eta) \in \Ubar \times \R^{d-1}$, the eigenvalues of $\M(z,\eta)$ split into two groups: those in $\U$, which we call the unstable 
ones, and those in $\D$, which we call the stable ones. For $(z,\eta) \in \Ubar \times \R^{d-1}$, we then introduce the spectral projector $\Pi_\M^s(z,\eta)$, resp. $\Pi_\M^u 
(z,\eta)$, of $\M(z,\eta)$ on the generalized eigenspace associated with eigenvalues in $\D$, resp. $\U$. These projectors are analytic with respect to $(z,\eta)$ on $\Ubar 
\times \R^{d-1}$. We can integrate from $-\infty$ to $0$ the recurrence relation \eqref{induction2'} and get
\begin{equation*}
\Pi_\M^s(z^k,\eta^k) \, W_0^k \, = \, \dfrac{1}{z^k \, \partial_z a_{p_1} (z^k,\eta^k)} \, \sum_{j_1 \le 0} \, \M(z^k,\eta^k)^{|j_1|} \, \Pi_\M^s(z^k,\eta^k) \, 
\Big( \, G^k_{j_1},0,\dots,0 \, \Big)^T \, .
\end{equation*}
The projector $\Pi_\M^s$ depends analytically on $(z,\eta) \in \Ubar \times \R^{d-1}$. Furthermore, since the spectrum of $\M$ does not meet $\cercle$ for $(z,\eta) \in 
\Ubar \times \R^{d-1}$, there exists a constant $C>0$ and a parameter $\delta \in (0,1)$ that are independent of $k \in \N$ and such that
\begin{equation*}
\forall \, j_1 \le 0 \, ,\quad \| \, \M(z^k,\eta^k)^{|j_1|} \, \Pi_\M^s(z^k,\eta^k) \, \| \, \le \, C \, \delta^{|j_1|} \, .
\end{equation*}
We thus get a uniform estimate with respect to $k$:
\begin{equation*}
| \, \Pi_\M^s(z^k,\eta^k) \, W_0^k \, |^2 \, \le \, C \, \sum_{j_1 \le 0} \, | \, G^k_{j_1} \, |^2 \, .
\end{equation*}
Passing to the limit and using \eqref{lem3-2}, we get $\Pi_\M^s (\underline{z},\underline{\eta}) \, \underline{W}_0 =0$, or in other words $\underline{W}_0 
=\Pi_\M^u (\underline{z},\underline{\eta}) \, \underline{W}_0$. Furthermore, since $(\underline{W}_{j_1})_{j_1 \le 0}$ satisfies the recurrence relation 
\eqref{induction2''} with $\underline{W}_0$ in the generalized eigenspace of $\M (\underline{z},\underline{\eta})$ associated with eigenvalues in $\U$, 
we find that $(\underline{W}_{j_1})_{j_1 \le 0}$ decays exponentially at $-\infty$ and thus belongs to $\ell^2(-\infty,0)$.
\bigskip

$\bullet$ The sequence $(\underline{W}_{j_1})_{j_1 \le 0}$ satisfies both recurrence relations \eqref{induction1''} and \eqref{induction2''}, which equivalently 
means that the complex valued sequence $(\underline{w}_{j_1})_{j_1 \le 0}$ satisfies the two recurrence relations \eqref{induction1} and \eqref{induction2} 
for $j_1 \le 0$. Hence $(\underline{w}_{j_1})_{j_1 \le 0}$ satisfies the recurrence relation associated with the greatest common divisor of the polynomials 
associated with \eqref{induction1} and \eqref{induction2}. In other words, the vector $\underline{W}_0$ belongs to the generalized eigenspace (of either 
$\LL$ or $\M$) associated with the common eigenvalues of $\M(\underline{z},\underline{\eta})$ and $\LL(\underline{z},\underline{\eta})$. Since we already 
know that $\M(\underline{z},\underline{\eta})$ has no eigenvalue on $\cercle$ and that $\underline{W}_0$ belongs to the generalized eigenspace of 
$\M(\underline{z},\underline{\eta})$ associated with eigenvalues in $\U$ (the unstable ones), we can conclude that $\underline{W}_0$ also belongs to 
the generalized eigenspace of $\LL(\underline{z},\underline{\eta})$ associated with those common eigenvalues of $\M(\underline{z},\underline{\eta})$ 
and $\LL(\underline{z},\underline{\eta})$ in $\U$.

The final argument is the following. The matrix $\LL(\underline{z},\underline{\eta})$ has $N^u$ eigenvalues in $\U$, $N^s$ in $\D$ and $N^c$ 
on $\cercle$ (all eigenvalues are counted with multiplicity). (Since $\underline{z}$ may belong to $\cercle$, $N^c$ is not necessarily zero.) With 
rather obvious notations, we let $\Pi_\LL^{u,s,c}(z,\eta)$ denote the corresponding spectral projectors of $\LL$ for $(z,\eta)$ sufficiently close to 
$(\underline{z},\underline{\eta})$. In particular, the $N^u$ eigenvalues corresponding to $\Pi_\LL^u(z,\eta)$ lie in $\U$ uniformly away from $\cercle$ 
for $(z,\eta)$ sufficiently close to $(\underline{z},\underline{\eta})$. We can then integrate \eqref{induction1'} from $+\infty$ to $0$ and derive (for $k$ 
sufficiently large):
\begin{equation*}
\Pi_\LL^u(z^k,\eta^k) \, W_0^k \, = \, -\dfrac{1}{a_{p_1} (z^k,\eta^k)} \, \sum_{j_1 \ge 0} \, \LL(z^k,\eta^k)^{-j_1-1} \, \Pi_\LL^u(z^k,\eta^k) \, 
\Big( \, F^k_{j_1},0,\dots,0 \, \Big)^T \, .
\end{equation*}
Using the uniform exponential decay of $\LL(z^k,\eta^k)^{-j_1-1} \, \Pi_\LL^u(z^k,\eta^k)$ (with respect to $j_1$) and the convergence \eqref{lem3-2}, 
we finally end up with 
\begin{equation*}
\Pi_\LL^u(\underline{z},\underline{\eta}) \, \underline{W}_0 \, = \, 0 \, .
\end{equation*}
Since $\underline{W}_0$ belongs to the generalized eigenspace of $\LL$ associated with those common eigenvalues of $\M(\underline{z},\underline{\eta})$ 
and $\LL(\underline{z},\underline{\eta})$ in $\U$, we can conclude that $\underline{W}_0$ equals zero. Applying the recurrence relation \eqref{induction1''}, 
the whole sequence $(\underline{W}_{j_1})_{j_1 \in \Z}$ is zero, which yields the expected contradiction.
\end{proof}

\subsection{End of the proof}

The end of the proof of Theorem \ref{mainthm} follows, as in \cite{jfcX}, from a superposition argument, see \cite[chapter 4]{benzoni-serre} for 
a similar argument in the context of continuous problems. The solution to \eqref{numibvp} with nonzero initial data is decomposed as the sum 
of a solution to an auxiliary problem \eqref{numabsorbing} (that auxiliary problem incorporates the initial data) and of a solution to a problem of 
the form \eqref{numibvp} with zero initial data (hence our earlier treatment of that case). The analysis in \cite{jfcX} can be applied again word 
for word so we feel free to refer the reader to that earlier work.

\appendix
\section{Numerical schemes with two time levels}

As we have seen in the proof of Proposition \ref{prop2}, the construction of energy and dissipation functionals for finite difference operators is 
dictated, through the Plancherel Theorem, by the analogous construction for recurrence relations. The inconvenience in the proof of Lemma 
\ref{lem1} is that the construction of the forms $q_e$ and $q_d$ depends on whether the roots of the polynomial $P$ are simple. There is 
however one case that can be dealt with in a unified way and for which the coefficients of the forms $q_e$ and $q_d$ depend in a very simple 
and explicit way on the coefficients of $P$. Namely, we have the following result in the case of degree two polynomials\footnote{Our attempts 
to obtain an analogue of Lemma \ref{lem1'} with `explicit' Hermitian forms for degree three polynomials have been unsuccessful so far, not 
mentioning higher degrees.} (the case of degree one polynomials is actually even simpler).

\begin{lemma}[The energy-dissipation balance law for second order recurrence relations]
\label{lem1'}
Let $P=a \, X^2 +b\, X+c \in \C [X]$ be a polynomial of degree $2$ ($a \neq 0$), that satisfies the following two properties:
\begin{itemize}
 \item The roots of $P$ are located in $\Dbar$.
 \item If $P$ has a double root, then it is located in $\D$.
\end{itemize}
Then the Hermitian form $q_e$, resp. $q_d$, defined on $\C^2$ by:
\begin{align*}
\forall \, (x_1,x_2) \in \C^2 \, , \, & q_e(x_1,x_2) := 2 \, |a|^2 \, |x_2|^2 \, + \, 2 \, \text{\rm Re } \big( \overline{a \, x_2} \, b \, x_1 \big) 
\, + \, \big( |a|^2 \, + \, |c|^2 \big) \, |x_1|^2 \, ,\\
& q_d(x_1,x_2) := \big( |a|^2 \, - \, |c|^2 \big) \, |x_2|^2 \, + \, 2 \, \text{\rm Re } \big( \overline{a \, x_2} \, b \, x_1 \big) 
 \, - \, 2 \, \text{\rm Re } \big( \overline{b \, x_2} \, c \, x_1 \big) + \big( |a|^2 \, - \, |c|^2 \big) \, |x_1|^2 \, ,
\end{align*}
is positive definite, resp. nonnegative. Furthermore, for any sequence $(v^n)_{n \in \N}$ with values in $\C$, there holds:
\begin{equation}
\label{LG2pas}
\forall \, n \in \N \, , \quad 2 \, \text{\rm Re} \, \Big( \overline{{\bf T} \, (P'({\bf T}) \, v^n)} \, P({\bf T}) \, v^n \Big) \, = \, 2 \, |P({\bf T}) \, v^n|^2 \, + \, 
q_e(v^{n+1},v^{n+2}) \, - \, q_e(v^n,v^{n+1}) \, + \, q_d(v^n,v^{n+1}) \, .
\end{equation}
\end{lemma}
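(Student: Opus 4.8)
The plan is to treat the three assertions in order: the algebraic balance law \eqref{LG2pas} first (which I expect to hold for \emph{any} degree-two polynomial, irrespective of root location), and then the two positivity statements, which is where the hypotheses on the roots enter. Note that this is a different, more explicit, decomposition than the one produced by Lemma \ref{lem1} for $\nu=2$; since the splitting into $({\bf T}-I)(\cdot)+(\cdot)$ is not unique, I would verify \eqref{LG2pas} directly rather than try to deduce it from the earlier construction.

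First I would establish \eqref{LG2pas} by brute-force expansion. Writing $x=v^n$, $y=v^{n+1}$, $z=v^{n+2}$, one has $P({\bf T})v^n=az+by+cx$, $P'({\bf T})v^n=2ay+bx$, and ${\bf T}(P'({\bf T})v^n)=2az+by$. Expanding the left-hand side $2\,\text{Re}(\overline{(2az+by)}(az+by+cx))$ produces the six monomials $|z|^2,|y|^2,|x|^2$ and the cross terms $\overline{z}y,\overline{z}x,\overline{y}x$. On the right, $2|P({\bf T})v^n|^2$ contributes the same monomials, while $q_e(y,z)-q_e(x,y)+q_d(x,y)$ is read off from the definitions. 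Matching coefficients is routine: the $|x|^2$ coefficient cancels to $0$, the $\overline{y}x$ term with coefficient $\bar a b$ cancels between $-q_e(x,y)$ and $q_d(x,y)$, and all remaining coefficients agree. I expect no subtlety here; it is pure bookkeeping.

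Next, for the positive definiteness of $q_e$, I would identify its Hermitian matrix $\left(\begin{smallmatrix} |a|^2+|c|^2 & a\bar b \\ \bar a b & 2|a|^2 \end{smallmatrix}\right)$. Its top-left entry is positive since $a\neq 0$, so it suffices to show the determinant $|a|^2(2|a|^2+2|c|^2-|b|^2)$ is positive. Writing the roots as $z_1,z_2$, with $b=-a(z_1+z_2)$ and $c=az_1z_2$, this reduces to the strict inequality $E:=2+2|z_1z_2|^2-|z_1+z_2|^2>0$. The key is to bound $\text{Re}(z_1\bar z_2)\le|z_1||z_2|$ and invoke the elementary identity $2-(u+v)^2+2u^2v^2=2(1-u^2)(1-v^2)+(u-v)^2$ with $u=|z_1|,v=|z_2|$, giving $E\ge 2(1-|z_1|^2)(1-|z_2|^2)+(|z_1|-|z_2|)^2\ge 0$. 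The decisive point is then the equality analysis: $E=0$ forces $|z_1|=|z_2|=1$ together with $\text{Re}(z_1\bar z_2)=|z_1||z_2|$, i.e. $z_1=z_2\in\cercle$, a double root on the unit circle. This is exactly what the second hypothesis excludes, so $E>0$ and $q_e$ is positive definite.

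Finally, for the nonnegativity of $q_d$, with Hermitian matrix $\left(\begin{smallmatrix} |a|^2-|c|^2 & a\bar b - b\bar c \\ \bar a b - \bar b c & |a|^2-|c|^2 \end{smallmatrix}\right)$, the diagonal entries $|a|^2-|c|^2=|a|^2(1-|z_1z_2|^2)$ are nonnegative since the roots lie in $\Dbar$, so only $\det\ge 0$ remains. Substituting the root expressions gives $a\bar b-b\bar c=|a|^2\big(\bar z_1(|z_2|^2-1)+\bar z_2(|z_1|^2-1)\big)$, and with $r:=|z_1|,s:=|z_2|$ one obtains the factorization $\det/|a|^4=(1-r^2s^2)^2-|(1-s^2)\bar z_1+(1-r^2)\bar z_2|^2$. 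Bounding $\text{Re}(\bar z_1 z_2)\le rs$ turns the subtracted modulus into $\big[(1-s^2)r+(1-r^2)s\big]^2=(r+s)^2(1-rs)^2$, whence $\det/|a|^4\ge(1-r^2s^2)^2-(r+s)^2(1-rs)^2=(1-rs)^2(1-r^2)(1-s^2)\ge 0$, which is manifest for $r,s\in[0,1]$. This argument uses only that the roots are in $\Dbar$; the double-root hypothesis is needed solely for $q_e$. The main obstacle is precisely that equality case for $q_e$: its determinant degenerates exactly along double roots on $\cercle$, so the hypothesis cannot be dropped and the strictness must be argued carefully through the two factorizations above, which are the genuine computational content of the lemma.
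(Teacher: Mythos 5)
Your proposal is correct and follows essentially the same route as the paper: direct expansion for the identity \eqref{LG2pas}, then reduction of both positivity claims to the trace/determinant analysis of the associated $2\times 2$ Hermitian matrices after substituting $b=-a(z_1+z_2)$, $c=a\,z_1z_2$, with the same kind of factorization (the paper writes the determinant of $q_d$ as $(1-|z_1|^2)(1-|z_2|^2)(1+|z_1|^2|z_2|^2-2\,\text{Re}(\overline{z_2}z_1))$, bounded below by $(1-|z_1|^2)^2(1-|z_2|^2)^2$, which matches your $(1-rs)^2(1-r^2)(1-s^2)$ bound). The only cosmetic difference concerns $q_e$: the paper rewrites it as the sum of three squares $|a|^2|x_2-z_2x_1|^2+|a|^2|x_2-z_1x_1|^2+|a|^2(1-|z_1|^2)(1-|z_2|^2)|x_1|^2$ and reads off positive definiteness by inspection, whereas you compute the determinant and carry out the equality analysis directly --- both arguments invoke the double-root hypothesis at exactly the same point.
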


\noindent The defining equations for $q_e$ and $q_d$ in Lemma \ref{lem1'} show that, if $P$ is a polynomial whose coefficients are trigonometric 
polynomials on $\R^d$, then the coefficients of $q_e$ and $q_d$ can also be chosen as trigonometric polynomials on $\R^d$ (this was not the 
case, in general, with our construction from Lemma \ref{lem1}).

\begin{proof}
The validity of \eqref{LG2pas} is a mere algebra exercise. One can for instance expand the left hand side, which reads:
$$
2 \, \text{\rm Re} \, \Big( \overline{(2 \, a \, v^{n+2} \, + \, b \, v^{n+1})} \, (a \, v^{n+2} \, + \, b \, v^{n+1} \, + \, c \, v^n) \Big) \, ,
$$
and verify that it coincides with the right hand side of \eqref{LG2pas} (a good starting point for this calculation is first to subtract $2 \, |P({\bf T}) \, v^n|^2$ 
to the latter quantity and factorize $P({\bf T}) \, v^n$ within the real part before expanding). The relation \eqref{LG2pas} can be also derived by noting 
that the above forms $q_e$ and $q_d$ in Lemma \ref{lem1'} differ from those given in \cite{jfcX} (which we have recalled in the proof of Lemma \ref{lem1}) 
in terms of the Lagrange polynomials associated with $P$ by the standard telescopic ``add and subtract'' trick. Namely, if $z_1$, $z_2$ denote the two roots 
of $P$, then $q_e$ equivalently reads:
\begin{equation}
\label{defqe'}
q_e(x_1,x_2) \, = \, |a|^2 \, \big| x_2 \, - \, z_2 \, x_1 \big|^2 \, + \, |a|^2 \, \big| x_2 \, - \, z_1 \, x_1 \big|^2 \, + \, 
|a|^2 \, (1 \, - \, |z_1|^2) \, (1 \, - \, |z_2|^2) \, |x_1|^2 \, ,
\end{equation}
where the two first terms in the sum on the right hand side correspond to the Lagrange polynomials $P_1({\bf T}) x_1$ and $P_2({\bf T}) x_1$ (see 
\eqref{defqe}), and the very last term in the sum on the right hand side has been added in order to keep $q_e$ positive definite in case $z_1$ and 
$z_2$ coincide, in which case they belong to $\D$ (this last term was absent in \cite{jfcX} since the roots were assumed to be simple). The link with 
the defining equation for $q_e$ in Lemma \ref{lem1'} is made by using the relations:
$$
a \, (z_1+z_2) \, = \, -b \, , \quad a \, z_1 \, z_2 \, = \, c \, .
$$
It is clear from the above alternative definition \eqref{defqe'} that $q_e$ is positive definite under the assumptions we have made for the polynomial $P$.

Let us now turn to the dissipation form $q_d$. In agreement with the alternative expression \eqref{defqe'} for $q_e$, the reader can check that the 
form $q_d$ given in Lemma \ref{lem1'} can be alternatively defined by the expression:
\begin{multline*}
q_d(x_1,x_2) \, = \, |a|^2 \, (1 \, - \, |z_1|^2) \, \big| x_2 \, - \, z_2 \, x_1 \big|^2 \, + \, |a|^2 \, (1 \, - \, |z_2|^2) \, \big| x_2 \, - \, z_1 \, x_1 \big|^2 \\
\, + \, |a|^2 \, (1 \, - \, |z_1|^2) \, (1 \, - \, |z_2|^2) \, \big( |x_1|^2 \, - \, |x_2|^2 \big) \, ,
\end{multline*}
where the two first terms in the sum on the right hand side read as in \eqref{defqd}, and the very last term on the right hand side has been added 
in order to keep the balance law \eqref{LG2pas} valid. Expanding the square moduli in the expression of $q_d$, we find that it can be represented 
by the Hermitian matrix:
$$
|a|^2 \, \begin{bmatrix}
1 \, - \, |z_1|^2 \, |z_2|^2 & - \, \big( (1 \, - \, |z_1|^2) \, \overline{z_2} \, + \, (1 \, - \, |z_2|^2) \, \overline{z_1} \big) \\
- \, \big( (1 \, - \, |z_1|^2) \, z_2 \, + \, (1 \, - \, |z_2|^2) \, z_1 \big) & 1 \, - \, |z_1|^2 \, |z_2|^2 \end{bmatrix} \, ,
$$
whose trace is clearly nonnegative since $z_1$ and $z_2$ belong to $\Dbar$. Furthermore, up to the positive $|a|^4$ factor, its determinant equals:
$$
\big( 1 \, - \, |z_1|^2 \, |z_2|^2 \big)^2 - \big| (1 \, - \, |z_1|^2) \, z_2 \, + \, (1 \, - \, |z_2|^2) \, z_1 \big|^2 \, .
$$
Expanding the square modulus and factorizing, the latter quantity is found to be equivalently given by:
$$
\big( 1 \, - \, |z_1|^2 \big) \, \big( 1 \, - \, |z_2|^2 \big) \, \Big( 1 \, + \, |z_1|^2 \, |z_2|^2 \, - \, 2 \, \text{\rm Re} \, (\overline{z_2} \, z_1) \Big) \, ,
$$
which is bounded from below by the nonnegative quantity:
$$
\big( 1 \, - \, |z_1|^2 \big)^2 \, \big( 1 \, - \, |z_2|^2 \big)^2 \, .
$$
Hence the determinant of $q_d$ is nonnegative, so $q_d$ is nonnegative. The proof of Lemma \ref{lem1'} is complete.
\end{proof}

\noindent Lemma \ref{lem1'} has an important consequence for the Cauchy problem \eqref{numcauchy} with $s=1$ (finite difference operators with 
two time levels). Namely, if we follow the proof of Proposition \ref{prop2} with the aim of constructing some energy and dissipation functionals for 
\eqref{numcauchy}, we introduce the multiplier $M$ as in \eqref{defLM} and obtain the relation \eqref{egaliteLG}. In the case $s=1$, the polynomial 
$P_\kappa$ reads:
$$
P_\kappa(X) \, = \, \widehat{Q_2}(\kappa) \, X^2 \, + \, \widehat{Q_1}(\kappa) \, X \, + \, \widehat{Q_0}(\kappa) \, ,
$$
If the Cauchy problem \eqref{numcauchy} is $\ell^2$-stable, then the polynomial $P_\kappa$ satisfies the conditions of Lemma \ref{lem1'} for any 
$\kappa \in (\cercle)^d$. Hence we can apply Lemma \ref{lem1'} and rewrite \eqref{egaliteLG} as:
\begin{equation}
\label{enerdissip2-1}
2 \, \langle \, M \, v^n,L \, v^n \, \rangle_{-\infty,+\infty} \, = \, 2 \, \Ng \, L\, v^n \, \Nd_{-\infty,+\infty}^2 \, + \, 
E(v^{n+1},v^{n+2}) \, - \, E(v^n,v^{n+1}) \, + \, D(v^n,v^{n+1}) \, ,
\end{equation}
with (here we apply the Plancherel Theorem `backwards'):
\begin{align*}
E(v^n,v^{n+1}) \, := \, & \, \int_{\R^d} 2 \, |\widehat{Q_2}(\kappa)|^2 \, |\widehat{v^{n+1}}(\xi)|^2 \, + \, 
2 \, \text{\rm Re } \left( \overline{\widehat{Q_2}(\kappa) \, \widehat{v^{n+1}}(\xi)} \, \widehat{Q_1}(\kappa) \, \widehat{v^n}(\xi) \right) \\
& \quad \quad \, + \, \Big( |\widehat{Q_2}(\kappa)|^2 \, + \, |\widehat{Q_0}(\kappa)|^2 \Big) \, |\widehat{v^n}(\xi)|^2 \, \dfrac{{\rm d}\xi}{(2\, \pi)^d} \, ,\\
= \, & \, 2 \, \Ng \, Q_2 \, v^{n+1} \, \Nd_{-\infty,+\infty}^2 \, + \, 2 \, \langle \, Q_2 \, v^{n+1},Q_1 \, v^n \, \rangle_{-\infty,+\infty} 
\, + \,  \Ng \, Q_2 \, v^n \, \Nd_{-\infty,+\infty}^2 \, + \,  \Ng \, Q_0 \, v^n \, \Nd_{-\infty,+\infty}^2 \, ,
\end{align*}
and, similarly:
\begin{multline*}
D(v^n,v^{n+1}) \, := \, \Ng \, Q_2 \, v^{n+1} \, \Nd_{-\infty,+\infty}^2 \, - \,  \Ng \, Q_0 \, v^{n+1} \, \Nd_{-\infty,+\infty}^2 \\
+ \, 2 \, \langle \, Q_2 \, v^{n+1},Q_1 \, v^n \, \rangle_{-\infty,+\infty} \, - \, 2 \, \langle \, Q_1 \, v^{n+1},Q_0 \, v^n \, \rangle_{-\infty,+\infty} 
\, + \, \Ng \, Q_2 \, v^n \, \Nd_{-\infty,+\infty}^2 \, - \,  \Ng \, Q_0 \, v^n \, \Nd_{-\infty,+\infty}^2 \, .
\end{multline*}
The interesting feature of these expressions is that both $E$ and $D$ correspond to the sum, with respect to $j \in \Z^d$, of local energy and dissipation 
densities $E_j(v^n,v^{n+1})$, resp. $D_j(v^n,v^{n+1})$, which depend on finitely many values of the sequences $v^n,v^{n+1}$ near $j$. For instance, the 
local density $E_j(v^n,v^{n+1})$ can be defined by:
$$
E_j(v^n,v^{n+1}) \, := \, 2 \, |Q_2 \, v_j^{n+1}|^2 \, + \, 2 \, (Q_2 \, v_j^{n+1}) \, (Q_1 \, v_j^n) \, + \, |Q_2 \, v_j^n|^2 \, + \,  |Q_0 \, v_j^n|^2 \, .
$$
Hence there is now a genuine hope to extend the definition of $E$ and $D$ to more general domains (by means of sums of local quantities which do not 
rely on the Fourier transform) and/or to take the energy-dissipation balance law \eqref{enerdissip2-1} as a starting point for deriving stability estimates for 
finite volume space discretizations on unstructured meshes. This is left to a future work.

\bibliographystyle{alpha}
\bibliography{LG}

\begin{thebibliography}{HNW93}

\bibitem[BGS07]{benzoni-serre}
S.~Benzoni-Gavage and D.~Serre.
\newblock {\em Multidimensional hyperbolic partial differential equations}.
\newblock Oxford University Press, 2007.
\newblock First-order systems and applications.

\bibitem[CG11]{jfcag}
J.-F. Coulombel and A.~Gloria.
\newblock Semigroup stability of finite difference schemes for multidimensional
  hyperbolic initial boundary value problems.
\newblock {\em Math. Comp.}, 80(273):165--203, 2011.

\bibitem[CL20]{CLneumann}
J.-F. Coulombel and F.~Lagouti\`ere.
\newblock The neumann numerical boundary condition for transport equations.
\newblock {\em Kinet. Relat. Models}, 13(1):1--32, 2020.

\bibitem[Cou13]{jfcnotes}
J.-F. Coulombel.
\newblock Stability of finite difference schemes for hyperbolic initial
  boundary value problems.
\newblock In {\em HCDTE Lecture Notes. Part I. Nonlinear Hyperbolic PDEs,
  Dispersive and Transport Equations}, pages 97--225. American Institute of
  Mathematical Sciences, 2013.

\bibitem[Cou15]{jfcX}
J.-F. Coulombel.
\newblock The {L}eray-{G}\aa rding method for finite difference schemes.
\newblock {\em J. \'Ec. polytech. Math.}, 2:297--331, 2015.

\bibitem[G{\aa}r56]{garding}
L.~G{\aa}rding.
\newblock Solution directe du probl\`eme de {C}auchy pour les \'equations
  hyperboliques.
\newblock In {\em La th\'eorie des \'equations aux d\'eriv\'ees partielles},
  Colloques Internationaux du C. N. R. S., pages 71--90. C. N. R. S., Paris,
  1956.

\bibitem[GKO95]{gko}
B.~Gustafsson, H.-O. Kreiss, and J.~Oliger.
\newblock {\em Time dependent problems and difference methods}.
\newblock John Wiley \& Sons, 1995.

\bibitem[GKS72]{gks}
B.~Gustafsson, H.-O. Kreiss, and A.~Sundstr{\"o}m.
\newblock Stability theory of difference approximations for mixed initial
  boundary value problems. {II}.
\newblock {\em Math. Comp.}, 26(119):649--686, 1972.

\bibitem[HNW93]{hnw}
E.~Hairer, S.~P. N{\o}rsett, and G.~Wanner.
\newblock {\em Solving ordinary differential equations. {I}}.
\newblock Springer-Verlag, second edition, 1993.
\newblock Nonstiff problems.

\bibitem[Kre68]{kreiss1}
H.-O. Kreiss.
\newblock Stability theory for difference approximations of mixed initial
  boundary value problems. {I}.
\newblock {\em Math. Comp.}, 22:703--714, 1968.

\bibitem[LN66]{laxnirenberg}
P.~D. Lax and L.~Nirenberg.
\newblock On stability for difference schemes: {A} sharp form of
  {G}\.{a}rding's inequality.
\newblock {\em Comm. Pure Appl. Math.}, 19:473--492, 1966.

\bibitem[Mic83]{michelson}
D.~Michelson.
\newblock Stability theory of difference approximations for multidimensional
  initial-boundary value problems.
\newblock {\em Math. Comp.}, 40(161):1--45, 1983.

\bibitem[SW97]{strikwerda-wade}
J.~C. Strikwerda and B.~A. Wade.
\newblock A survey of the {K}reiss matrix theorem for power bounded families of
  matrices and its extensions.
\newblock In {\em Linear operators ({W}arsaw, 1994)}, volume~38 of {\em Banach
  Center Publ.}, pages 339--360. Polish Acad. Sci., 1997.

\bibitem[Tre84]{trefethen3}
L.~N. Trefethen.
\newblock Instability of difference models for hyperbolic initial boundary
  value problems.
\newblock {\em Comm. Pure Appl. Math.}, 37:329--367, 1984.

\bibitem[Wu95]{wu}
L.~Wu.
\newblock The semigroup stability of the difference approximations for
  initial-boundary value problems.
\newblock {\em Math. Comp.}, 64(209):71--88, 1995.

\end{thebibliography}
\end{document}